\newtheorem{theorem}{Theorem}
\newtheorem*{theorem*}{Theorem}
\newtheorem{lemma}{Lemma}
\theoremstyle{definition}
\newtheorem{remark}{Remark}
\numberwithin{equation}{section}
\newcommand{\dd}{\ensuremath{\mathrm{d}}}
\newcommand{\ii}{\ensuremath{\mathrm{i}}}
\renewcommand{\Im}{\ensuremath{\mathsf{Im}\,}}
\renewcommand{\Re}{\ensuremath{\mathsf{Re}\,}}
\begin{document}
 \title{\bf Relations among the Riemann zeta and Hurwitz zeta functions, as well as their products}
\author{ A.C.L Ashton\footnote{a.c.l.ashton@damtp.cam.ac.uk}, \,\,\, A.S Fokas\footnote{t.fokas@damtp.cam.ac.uk} \\
\\
Department of Applied Mathematics \\
and Theoretical Physics,\\
University of Cambridge, \\
Cambridge, CB3 0WA, UK. \\
\\
Viterbi School of Engineering,\\
University of Southern California,\\
Los Angeles, California,\\
90089-2560, USA.
}
\date{}
\maketitle
\vspace{-5mm}

\begin{abstract}
Several relations are obtained among the Riemann zeta and Hurwitz zeta functions, as well as their products. A particular case of these relations give rise to a simple re-derivation if the important results of \cite{katsurada1996explicit}. Also, a relation derived here provides the starting point of a novel approach which in a series of companion papers yields a formal proof of the Lindel\"{o}f hypothesis. Some of the above relations motivate the need for analysing the large $\alpha$ behaviour of the modified Hurwitz zeta function $\zeta_1(s,\alpha)$, $s\in \mathbf{C}$, $\alpha\in \mathbf{R}$, whihc is also presented here.

\vspace{3mm}\hspace{-7mm}
\textbf{MSC Classification:} 11M35; 11L07.
\end{abstract}

\section{Introduction}
Let
\begin{equation}
 s=\sigma+\ii t, \quad \sigma \in \mathbf{R}, \quad t \in \mathbf{R}.
\end{equation}
The Riemann zeta function $\zeta(s)$ and the modified Hurwitz zeta function $\zeta_1(s,\alpha)$ are defined, respectively, by
\begin{equation}
\zeta(s) = \sum_{n=1}^\infty \frac{1}{n^s}, \quad \sigma>1,
\end{equation}
\begin{equation}
\zeta_1(s,\alpha) = \sum_{n=1}^\infty \frac{1}{(n+\alpha)^s}, \quad \sigma>1, \quad \alpha \geq 1, \label{th2.1}
\end{equation}
and by analytic continuation for $s\in \mathbf{C}$, $s\neq 1$.

The modified Hurwitz zeta function is simply related to the Hurwitz zeta function, $\zeta(s,\alpha)$:
\begin{equation}
\zeta(s,\alpha) = \frac{1}{\alpha^s} + \zeta_1(s,\alpha), \quad s\in \mathbf{C}, \alpha>0.
\end{equation}

In this paper we present certain relations between $\zeta(s)$ and $\zeta_1(s,\alpha)$ as well as between products of these functions. In more detail, the following results are presented in sections 2--6:

In \S 2 it is shown that the modified Hurwitz zeta function satisfies the identity
\begin{multline} |\zeta_1(s,\alpha)|^2 = \zeta_1(2\sigma,\alpha) + \frac{1}{2\pi\ii} \int_{(c)} \left( \frac{\Gamma(s+z)}{\Gamma(s)} + \frac{\Gamma(\bar{s}+z)}{\Gamma(\bar{s})} \right) \Gamma(-z) \zeta(-z) \zeta_1(2\sigma+z,\alpha)\, \dd z  \\
\sigma>1, \quad t>0, \quad \max(-\sigma,1-2\sigma) < c < 1, \label{f1.5}
\end{multline}
where $\Gamma(s)$, $s\in \mathbf{C}$, denotes the Gamma function and $(c)$ denotes the vertical line in the complex $z$-plane on which $\Re(z)=c$.

It is shown in \cite{fokas2016} that \eqref{f1.5} yields a singular integral equation for $|\zeta(s)|^2$, $0<\sigma<1$, $t>0$, and this equation provides the starting point for the proof of the analogue of Lindel\"{o}f's hypothesis for a function which differs from the Riemann zeta function only in the occurrence of the $\log n$ term. We recall that Lindel\"{o}f's hypothesis concerns the growth of $\zeta(s)$ as $t\rightarrow \infty$ along the critical line $\sigma=1/2$ and states that $\zeta(\tfrac{1}{2}+\ii t) = \mathcal{O}(t^\epsilon)$ for every positive $\epsilon$. The Riemann hypothesis implies Lindel\"of's hypothesis and conversely, Lindel\"of's hypothesis implies that very few zeros could disobey Riemann's hypothesis \cite{gelbartmiller2003}. Significant progress has been made by developing ingenuous ways of estimating exponential sums $\sum_n e^{2\pi\ii f(n)}$ using generalisations of the Vinogradov method \cite{vinogradov1935new}. Until recently the best result in this direction had been obtained by Huxley \cite{huxley2005exponential}, where it is proved that $\zeta(\tfrac{1}{2}+\ii t) = \mathcal{O} \left( t^{32/205 + \epsilon}\right)$. A short time ago Bourgain announced a further improvement \cite{bourgain2014decoupling} where the exponent $32/205$ was reduced to $53/342$.

In \S 3 it is shown that there exists the following asymptotic relation between the Riemann zeta function and the modified Hurwtiz zeta function:
\begin{equation}
\zeta(s) = \int_0^1 B_N(\alpha) \zeta_1(s,\alpha)\, \dd \alpha + \chi(s) \int_0^1 B_N(-\alpha) \zeta_1(1-s,\alpha)\,\dd \alpha + \mathcal{O}(t^{-\sigma/2} \log t), \quad 0<\sigma<1, \quad t\rightarrow \infty, \label{f1.6}
\end{equation}
where $B_N(\alpha)$ is defined by
\begin{equation}
B_N(\alpha) = \sum_{1\leq n \leq N} e^{2\pi\ii n \alpha}, \quad \alpha>0.
\end{equation}
and $N=\sqrt{t/2\pi}$. A direct consequence of \eqref{f1.6} is the following theorem:
\begin{theorem}
Let $I_k(t)$ denote the $2k$-th power mean of the modified Hurwitz zeta function, namely
\begin{equation}
I_k(t) = \int_0^1 | \zeta_1(\tfrac{1}{2}+\ii t, \alpha)|^{2k}\, \dd \alpha, \quad k \in \mathbf{N}.
\end{equation}
Then
\begin{equation} |\zeta(\tfrac{1}{2}+\ii t)| \lesssim   t^{1/4k} I_k(t)^{1/2k}, \qquad k\in \mathbf{N} \label{Fok5}\end{equation}
in which the implicit constant is independent of $k$.
\end{theorem}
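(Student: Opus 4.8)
The plan is to specialise the asymptotic relation \eqref{f1.6} to the critical line $s=\tfrac12+\ii t$ and then extract \eqref{Fok5} by a single application of H\"older's inequality, with the $2k$-th moment $I_k(t)$ appearing naturally as one of the two factors. First I would put $\sigma=\tfrac12$ in \eqref{f1.6}, so the error term becomes $\mathcal{O}(t^{-1/4}\log t)$. Writing $J=\int_0^1 B_N(\alpha)\,\zeta_1(\tfrac12+\ii t,\alpha)\,\dd\alpha$, the crucial observation is that the \emph{second} integral in \eqref{f1.6} is exactly $\overline{J}$: for real $\alpha$ one has $\overline{(n+\alpha)^{-s}}=(n+\alpha)^{-\bar s}$, and since $1-s=\bar s=\tfrac12-\ii t$ on the critical line this gives $\zeta_1(1-s,\alpha)=\overline{\zeta_1(s,\alpha)}$, while $B_N(-\alpha)=\overline{B_N(\alpha)}$. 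Hence \eqref{f1.6} reads $\zeta(\tfrac12+\ii t)=J+\chi(\tfrac12+\ii t)\,\overline{J}+\mathcal{O}(t^{-1/4}\log t)$, and using the standard fact $|\chi(\tfrac12+\ii t)|=1$ (which follows from $\chi(s)\chi(1-s)=1$ together with $1-s=\bar s$) the triangle inequality yields
\[
|\zeta(\tfrac12+\ii t)|\le 2\,|J|+\mathcal{O}(t^{-1/4}\log t).
\]

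Next I would bound $|J|$ by H\"older's inequality on $[0,1]$ with the conjugate exponents $2k$ and $q=\tfrac{2k}{2k-1}$:
\[
|J|\le \Big(\int_0^1|\zeta_1(\tfrac12+\ii t,\alpha)|^{2k}\,\dd\alpha\Big)^{1/2k}\Big(\int_0^1|B_N(\alpha)|^{q}\,\dd\alpha\Big)^{1/q}=I_k(t)^{1/2k}\,\|B_N\|_{q}.
\]
The first factor is precisely $I_k(t)^{1/2k}$, so the theorem reduces to showing $\|B_N\|_q\lesssim t^{1/4k}$, with constant independent of $k$. Since $N=\sqrt{t/2\pi}$ this is equivalent to $\|B_N\|_q\lesssim N^{1/2k}$. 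To estimate the norm I would use the closed form $|B_N(\alpha)|=|\sin(\pi N\alpha)/\sin(\pi\alpha)|$, split the integral at the scale $\alpha\asymp 1/N$ where the kernel transitions from its peak value $N$ to its $1/\|\alpha\|$ decay, and so obtain $\int_0^1|B_N|^q\,\dd\alpha\asymp N^{q-1}$ for $q>1$; taking the $q$-th root gives $N^{(q-1)/q}=N^{1/2k}$, as required. The error term is harmless: by the power-mean inequality $I_k(t)^{1/2k}\ge I_1(t)^{1/2}$ does not decay faster than any negative power of $t$, whereas $t^{1/4k}\ge 1$, so the right-hand side dominates $t^{-1/4}\log t$ and the $\mathcal{O}(t^{-1/4}\log t)$ is absorbed.

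The main obstacle is the \emph{uniformity in $k$} of the estimate $\|B_N\|_q\lesssim N^{1/2k}$. The difficulty is that the implied constant in $\int_0^1|B_N|^q\,\dd\alpha\asymp N^{q-1}$ degenerates as $q\to 1^+$ (equivalently $k\to\infty$), reflecting $\|B_N\|_1\asymp\log N$; the away-from-peak contribution is of size $\tfrac{1}{q-1}N^{q-1}=(2k-1)N^{q-1}$, so a careless argument produces a spurious factor growing like $k$ (or like a power of $\log t$). Removing this factor, and thereby justifying that the constant in \eqref{Fok5} is genuinely independent of $k$, is the delicate point. I would expect it to require either a sharp evaluation of the $L^q$-norm of the Dirichlet kernel $B_N$, or exploiting that $\zeta_1(\tfrac12+\ii t,\cdot)$ is slowly varying in $\alpha$ on the scale $1/N$ and hence stays far from the H\"older extremiser $|\zeta_1|\propto|B_N|^{q-1}$, which concentrates in a window of width $1/N$ about $\alpha=0$.
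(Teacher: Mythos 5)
Your proposal follows essentially the same route as the paper: specialise \eqref{weakapprox2} (equivalently \eqref{f1.6}) to $\sigma=\tfrac12$, bound each of the two integrals via H\"older's inequality with exponents $2k$ and $q=2k/(2k-1)$, and invoke the kernel estimate $\|B_N\|_q\lesssim N^{1-1/q}=N^{1/2k}$ of Lemma \ref{dirichletest} together with $N=\sqrt{t/2\pi}$. Your conjugation observations ($B_N(-\alpha)=\overline{B_N(\alpha)}$, $\zeta_1(1-s,\alpha)=\overline{\zeta_1(s,\alpha)}$ at $\sigma=\tfrac12$, and $|\chi(\tfrac12+\ii t)|=1$) merely make explicit what the paper leaves implicit when it bounds only the first integral and declares the theorem proved; your treatment of the error term is likewise fine.

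The one place where you go beyond the paper is the worry about uniformity in $k$, and you are right that it is a genuine issue --- but it is an issue with the paper's own argument, not something the paper resolves by a cleverer device. The paper simply applies Lemma \ref{dirichletest} at $p=2k/(2k-1)$, and the implicit constant there does depend on $p$: as you note, the away-from-peak contribution to $\int_0^1|B_N|^q\,\dd\alpha$ is of order $N^{q-1}/(q-1)$, so H\"older delivers a constant of order $k$; interpolating instead via $\|B_N\|_q\le\|B_N\|_1^{1-1/k}\|B_N\|_2^{1/k}$ trades this for a factor $(\log N)^{1-1/k}$. No bound of the form $\|B_N\|_q\lesssim N^{1/2k}$ with an absolute constant can hold: since $(0,1)$ has measure one, $\|B_N\|_q\ge\|B_N\|_1\gtrsim\log N$, whereas $N^{1/2k}=O(1)$ as soon as $k\gtrsim\log N$. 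Consequently the clause ``the implicit constant is independent of $k$'' is not justified by the proof in the paper; what that proof actually yields is \eqref{Fok5} with a constant of size $O\left(\min(k,\log t)\right)$. This weaker form suffices for the paper's stated application (Lindel\"of follows if $I_k(t)\lesssim_{k,\epsilon}t^\epsilon$ for every fixed $k$), since that argument uses each $k$ separately. So you should not search for a missing idea: your proof, with the honest $k$-dependence recorded, is exactly as strong as the one in the paper.
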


This result immediately implies that Lindel\"{o}f's hypothesis is true provided that for each $\epsilon>0$ $I_k(t) \lesssim_{k,\epsilon} t^{\epsilon}$ for each $k\in \mathbf{N}$.

In connection with \eqref{Fok5} we recall that an equivalent formulation of the Lindel\"of hypothesis involves estimating the $2k$-th power mean of the Riemann-zeta function
\begin{equation} J_k(T) = \frac{1}{T} \int_0^T \left| \zeta(\tfrac{1}{2}+\ii t) \right|^{2k}\, \dd t. \label{zetameans}\end{equation}
It can be shown \cite[Th. 13.2]{titchmarsh1986theory} that the Lindel\"of hypothesis holds true if and only if $J_k(T) = \mathcal{O}\left( T^\epsilon\right)$ for each $\epsilon>0$ and for each $k\in \mathbf{N}$.

In \S 4 the following identities are presented:
\begin{equation}
\int_0^1 \zeta_1(u_1,\alpha)\zeta_1(u_2,\alpha)\, \dd \alpha = \frac{1}{u_1+u_2-1} + \int_1^\infty \alpha^{-u_1} \zeta_1(u_2,\alpha)\, \dd \alpha + \int_1^\infty \alpha^{-u_2} \zeta_1(u_1,\alpha)\,\dd \alpha, \label{f1.11}
\end{equation}
\begin{multline}
\int_0^1 \zeta_1(u_1,\alpha)\zeta_1(u_2,\alpha) \zeta_1(u_3,\alpha)\, \dd \alpha = \frac{1}{u_1+u_2+u_3-1} + \int_1^\infty \alpha^{-u_1} \zeta_1(u_2,\alpha) \zeta_1(u_3,\alpha)\, \dd \alpha \\
+ \int_1^\infty \alpha^{-u_2} \zeta_1(u_1,\alpha) \zeta_1(u_3,\alpha)\, \dd \alpha + \int_1^\infty \alpha^{-u_3} \zeta_1(u_1,\alpha) \zeta_1(u_2,\alpha)\, \dd \alpha \\
+ \int_1^\infty \alpha^{-u_1-u_2} \zeta_1(u_3,\alpha)\, \dd \alpha + \int_1^\infty \alpha^{-u_2-u_3} \zeta_1(u_1,\alpha)\,\dd\alpha + \int_1^\infty \alpha^{-u_3-u_1} \zeta_1(u_2,\alpha)\, \dd\alpha, \label{f1.12}
\end{multline}
and
\begin{multline}
\int_0^1 \prod_{i=1}^4 \zeta_1(u_i,\alpha)\, \dd \alpha = \frac{1}{u_1+u_2+u_3+u_4-1} + \sum_{\mathrm{perms}} \int_1^\infty \alpha^{-(u_1+u_2+u_3)}\zeta_1(u_4,\alpha)\, \dd \alpha \\
+\sum_{\mathrm{perms}} \int_1^\infty \alpha^{-(u_1+u_2)} \zeta_1(u_3,\alpha)\zeta_1(u_4,\alpha)\, \dd \alpha + \sum_{\mathrm{perms}} \int_1^\infty \alpha^{-u_1} \zeta_1(u_2,\alpha) \zeta_1(u_3,\alpha) \zeta_1(u_4,\alpha)\, \dd \alpha, \label{f1.13}
\end{multline}
where $\Re \{u_i\}_{i=1}^4 <1$. The above formulae can be generalised in a straightforward way.

As a direct application of \eqref{f1.11} we present in \S 4 a new derivation of the following exact identity in \cite{katsurada1996explicit}:
\begin{multline} \int_0^1 \zeta_1(u,\alpha) \zeta_1(v,\alpha)\, \dd \alpha = \frac{1}{u+v-1} + \Gamma(u+v-1) \zeta(u+v-1) \left[ \frac{\Gamma(1-u)}{\Gamma(u)} + \frac{\Gamma(1-v)}{\Gamma(v)} \right] \\
+ \frac{\zeta(u)-1}{v-1} + \frac{\zeta(v)-1}{u-1} + \frac{u}{v-1} \int_0^1 \alpha^{1-v}\zeta_1(\alpha,u+1)\, \dd \alpha + \frac{v}{u-1} \int_0^1 \alpha^{1-u} \zeta_1(\alpha,v+1)\, \dd\alpha. \label{I1} \end{multline}
This immediately yields the estimate
\begin{equation} I_1(t) = \log \left( \frac{t}{2\pi}\right) + \gamma + \mathcal{O} \left( \frac{1}{t^2}\right), \quad t\rightarrow \infty. \label{(1.5)} \end{equation}

It is shown in \cite{fokaskalim2016} that \eqref{f1.12} plays a crucial role for the derivation of an interesting identity between certain double exponential sums. Indeed, it is well known that if $0\leq \alpha < 1$, then the large $t$-asymptotics of $\zeta_1(s,\alpha)$ is dominated by the sum $S_1$, defined by
\begin{equation} S_1(\sigma,t,\alpha) = \sum_{1\leq n < t/2\pi} e^{-2\pi\ii n \alpha} m^{s-1}, \quad 0<\sigma<1, t>0. \label{f1.16}
\end{equation}
However, if $1\leq \alpha <\infty$, the large $t$-asymptotics of $\zeta_1(s,\alpha)$ is dominated by the sum $S_1$ defined in \eqref{f1.16}, as well as by a different sum, $S_2'(\sigma,t,\alpha)$ \cite{fokasfern2016}. Thus, the large $t$-asymptotics of equation \eqref{f1.12} provides a relation between two double sums generated from $S_1$ and $S_2'$, and  the explicit formulae obtained from the large $t$-asymptotics of the linear and quadratic terms.

Similarly, equation \eqref{f1.13} yields novel relations between cubic exponential sums.

Before using equations \eqref{f1.12} and \eqref{f1.13} for the case that $\Re \{u_i\}_{i=1}^4<1$, it is necessary to regularize the terms involving $\alpha \rightarrow \infty$; this regularisation is discussed in \S 4.

Finally, in \S 5, by considering the Fourier series of the product $\zeta_1(u,\alpha)\zeta_1(v,\alpha)$ with complex numbers $u,v$ satisfying $\Re u>0, \Re v>0$, by using certain elementary estimates for the resulting coefficients, and by employing theorem 1 together with Parseval's identity, we obtain the following asymptotic result.
\begin{theorem}
For each $\eta>0$ we have
\[ |\zeta(\tfrac{1}{2}+\ii t)|^4 \lesssim_\eta t^{1/2}  \sum_{|n|\leq t/\pi} \left| \int_1^{t/2\pi+\eta} \alpha^{-1/2+\ii t} \zeta_1(\alpha,\tfrac{1}{2}+\ii t) e^{-2\pi\ii n \alpha}\, \dd \alpha\right|^2. \]
In particular, if the sum is $\mathcal{O}_\epsilon(t^{\epsilon})$ for each $\epsilon>0$ then $|\zeta(\tfrac{1}{2}+\ii t)| = \mathcal{O}_\epsilon (t^{1/8+\epsilon})$.
\end{theorem}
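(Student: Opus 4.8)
The plan is to reduce the claim to a mean-square estimate for the modified Hurwitz zeta function and then to evaluate that mean square by Fourier analysis. First I would apply \eqref{Fok5} with $k=2$, which gives
\[ |\zeta(\tfrac{1}{2}+\ii t)|^4 \lesssim t^{1/2}\, I_2(t), \qquad I_2(t) = \int_0^1 |\zeta_1(\tfrac{1}{2}+\ii t,\alpha)|^4\, \dd\alpha, \]
so it suffices to dominate $I_2(t)$ by the stated sum. Writing $s=\tfrac{1}{2}+\ii t$ and $G(\alpha)=|\zeta_1(s,\alpha)|^2=\zeta_1(s,\alpha)\zeta_1(\bar s,\alpha)$, I would expand $G$ on the interval $[0,1]$ in a Fourier series and apply Parseval's identity to obtain
\[ I_2(t)=\int_0^1 G(\alpha)^2\,\dd\alpha = \sum_{n\in\mathbf{Z}} |c_n|^2, \qquad c_n = \int_0^1 \zeta_1(s,\alpha)\zeta_1(\bar s,\alpha)\, e^{-2\pi\ii n\alpha}\,\dd\alpha. \]

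The central step is the evaluation of the coefficients $c_n$, which are the natural generalisation of \eqref{f1.11}: indeed $c_0$ is exactly \eqref{f1.11} with $u_1=s$, $u_2=\bar s$. I would repeat the derivation of \eqref{f1.11} with the oscillatory factor $e^{-2\pi\ii n\alpha}$ inserted, keeping one factor $\zeta_1(s,\alpha)$ intact and writing $\zeta_1(\bar s,\alpha)=\sum_{k\ge1}(k+\alpha)^{-\bar s}$. Because $e^{-2\pi\ii n\alpha}$ is invariant under integer shifts of $\alpha$, the substitution $\beta=k+\alpha$ unfolds $\sum_k\int_0^1$ into $\int_1^\infty$, and the shift identity $\zeta_1(s,\alpha)=\zeta_1(s,k+\alpha)+\sum_{j=1}^k(j+\alpha)^{-s}$ produces the principal term
\[ \int_1^\infty \alpha^{-\bar s}\,\zeta_1(s,\alpha)\,e^{-2\pi\ii n\alpha}\,\dd\alpha, \qquad \alpha^{-\bar s}=\alpha^{-1/2+\ii t}, \]
together with its $s\leftrightarrow\bar s$ symmetric partner and lower-order elementary terms. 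Since $u=s$ and $v=\bar s$ are complex conjugates, the symmetric partner is the complex conjugate of the principal term with $n\mapsto -n$; after the symmetric summation $\sum_n|c_n|^2$ the two merge, and it suffices to control the single displayed integral.

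Finally I would truncate. On the critical line one has $s+\bar s=1$, so the elementary term $1/(u_1+u_2-1)$ in the analogue of \eqref{f1.11} and the $\alpha\to\infty$ tail of the principal integral are each individually divergent and must cancel — this is precisely the regularisation alluded to in \S 4, and it is consistent with $c_0=I_1(t)=\log(t/2\pi)+\gamma+o(1)$ being finite. Passing to the finite range $[1,t/2\pi+\eta]$ resolves this: for $\alpha>t/2\pi$ the phase of $\alpha^{-\bar s}\zeta_1(s,\alpha)e^{-2\pi\ii n\alpha}$ has no stationary point (the condition $t/\alpha-t/(m+\alpha)=2\pi n$ confines the relevant $\alpha$ to $\alpha\lesssim t/2\pi$ and $n$ to a range well inside $|n|\le t/\pi$), so repeated integration by parts shows that cutting at $t/2\pi+\eta$, with the cushion $\eta>0$ separating the cut from a possible endpoint stationary point, and discarding $|n|>t/\pi$, costs only a negligible error. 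Collecting the estimates yields
\[ I_2(t)\lesssim_\eta \sum_{|n|\le t/\pi}\left|\int_1^{t/2\pi+\eta}\alpha^{-1/2+\ii t}\,\zeta_1(s,\alpha)\,e^{-2\pi\ii n\alpha}\,\dd\alpha\right|^2, \]
which together with the first display proves the bound. The final assertion is then immediate: if the sum is $\mathcal{O}_\epsilon(t^\epsilon)$ then $|\zeta(\tfrac{1}{2}+\ii t)|^4\lesssim_\epsilon t^{1/2+\epsilon}$, i.e. $|\zeta(\tfrac{1}{2}+\ii t)|=\mathcal{O}_\epsilon(t^{1/8+\epsilon})$.

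The step I expect to be the main obstacle is the quantitative control of the error terms in the evaluation of $c_n$: one must show that the corrections from the unfolding (the partial sums $\sum_{j=1}^k(j+\alpha)^{-s}$), the elementary non-integral terms, and the two tails — in $\alpha$ beyond $t/2\pi+\eta$ and in $n$ beyond $t/\pi$ — all sum to a quantity dominated by the retained sum, uniformly in $t$. Making the stationary/non-stationary dichotomy precise, and handling the delicate near-cancellation at $\sigma=\tfrac12$ that forces the finite cutoff and the free parameter $\eta$, is the technical crux, whereas the Fourier–Parseval reduction itself is routine.
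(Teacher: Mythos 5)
Your overall architecture matches the paper's: Theorem 1 with $k=2$ reduces the claim to bounding $I_2(t)$, Parseval converts $I_2(t)$ into $\sum_n|c_n|^2$ for the Fourier coefficients of $|\zeta_1(s,\alpha)|^2$, and for $\Re u,\Re v>1$ your unfolding computation (a more elementary substitute for the paper's convolution-plus-delta-comb argument) yields exactly the paper's identity
\begin{equation*}
q_n(u,v)=a_n(u+v)+\int_1^\infty \zeta_1(u,\alpha)\,\alpha^{-v}e^{-2\pi\ii n\alpha}\,\dd\alpha+\int_1^\infty \zeta_1(v,\alpha)\,\alpha^{-u}e^{-2\pi\ii n\alpha}\,\dd\alpha .
\end{equation*}
The truncation at $t/2\pi+\eta$, the discarding of $|n|>t/\pi$, and the final $t^{1/8+\epsilon}$ deduction also coincide with the paper.

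The genuine gap is precisely the step you flag as the crux: how this identity, proved only for $\Re u,\Re v>1$ (your unfolding needs the series $\sum_k(k+\alpha)^{-\bar s}$, which does not exist at $\sigma=\tfrac12$), is transported to $u=\tfrac12+\ii t$, $v=\tfrac12-\ii t$. Your proposed mechanism --- pass to the finite range and integrate by parts on the tail --- cannot be executed literally: at $u+v=1$ the term $a_n(u+v)$ and the two infinite integrals are \emph{individually divergent} (as $\alpha\to\infty$ the integrand behaves like $\frac{1}{u-1}e^{-2\pi\ii n\alpha}$, which is not integrable even in the improper sense), so there is no identity to truncate and no convergent tail on which to integrate by parts; "the divergences cancel" must be given a precise meaning \emph{before} the cutoff, not after. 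The paper supplies the missing device: using Rane's Euler--Maclaurin representation \eqref{Raneeq}, one replaces $\alpha^{-v}\zeta_1(u,\alpha)$ by $\alpha^{-v}\zeta_1(u,\alpha)-\frac{\alpha^{1-u-v}}{u-1}+\frac{\alpha^{-u-v}}{2}$, and Lemma \ref{intbyparts} shows this difference is $\mathcal{O}_\eta\bigl(t\,\alpha^{-\Re u-\Re v-1}\bigr)$ for $\alpha\geq \eta+t/2\pi$, hence absolutely integrable on $(1,\infty)$ for all $\sigma>0$. After regrouping, the divergent $a_n(u+v)$ contributions cancel identically, leaving
\begin{equation*}
q_n(u,v)=\Bigl[\tfrac{1}{u-1}+\tfrac{1}{v-1}\Bigr]a_n(u+v-1)+\text{(two regularized integrals)},
\end{equation*}
an expression that continues analytically to $\sigma>0$ and so legitimises setting $\sigma=\tfrac12$. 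Only then does the paper truncate at $t/2\pi+\eta$ (tail $\mathcal{O}_\eta(t^{1-2\sigma}/n)$ by the same lemma), strip the elementary terms by nonstationary phase, and control $\sum_{|n|>t/\pi}|q_n|^2=\mathcal{O}(t^{2-4\sigma})$ via the approximate functional equation --- all operations on absolutely convergent quantities. So your plan has the right skeleton and the right heuristic (the cancellation you anticipate is real), but without the subtraction/continuation step the central step fails as written.
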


\section{An identity involving the Hurwitz function}
In order to derive \eqref{f1.5} we let
\begin{equation}
\alpha \geq 0, \quad u\in \mathbf{C}, \quad v \in \mathbf{C}, \quad \Re u>1, \quad \Re v>1. \label{th1.1}
\end{equation}
The definition of the modified Hurwitz zeta function, namely equation \eqref{th2.1}, implies
\begin{equation}
\zeta_1(u,\alpha) \zeta_1(v,\alpha) = \zeta_1(u+v,\alpha) + f(u,v,\alpha) + f(v,u,\alpha) \label{th1.2}
\end{equation}
where
\begin{equation}
f(u,v,\alpha) = \sum_{n=1}^\infty\sum_{m=1}^\infty (n+\alpha)^{-v} (n+m+\alpha)^{-u}.
\end{equation}
Assuming that
\begin{equation}
|\arg(-w)|<\pi, \quad -\Re a < b < 0,
\end{equation}
we observe the Mellin-Barnes type integral identity
\begin{equation}
\Gamma(a)(1-w)^{-a} = \frac{1}{2\pi\ii} \int_{(b)} \Gamma(z+a) \Gamma(-z) (-w)^z\, \dd z. \label{th1.4}
\end{equation} 
Letting in equation \eqref{th1.4}
\[ w=- \frac{m}{n+\alpha}, \quad a=u, \]
we find
\[ \Gamma(u) (n+\alpha)^u (m+n+\alpha)^{-u} = \frac{1}{2\pi\ii} \int_{(c)} \Gamma(u+z) \Gamma(-z) m^z (n+\alpha)^{-z}\, \dd z. \]
Thus,
\[ (n+\alpha)^{-v} (n+m+\alpha)^{-u} = \frac{1}{2\pi\ii} \int_{(c)} \frac{\Gamma(u+z)}{\Gamma(u)} \Gamma(-z) m^z (n+\alpha)^{-u-v-z}\, \dd z. \]
Summing over $m$ and $n$ we obtain
\begin{equation}
f(u,v,\alpha) = \frac{1}{2\pi\ii} \int_{(c)} \frac{\Gamma(u+z)}{\Gamma(u)} \Gamma(-z) \zeta(-z) \zeta_1(u+v+z,\alpha)\, \dd z. \label{th1.5}
\end{equation}
Substituting \eqref{th1.5} into \eqref{th1.2} and then letting $u=s$, $v=\bar{s}$ in the resulting expression, we find \eqref{f1.5}.

\section{An asymptotic relation between the Riemann and Hurwitz functions}
The approximate functional equation for the Riemann zeta function provides the starting point for the estimation of the $\zeta(s)$ along the critical line. In this section we derive a weak analogue of this equation. Throughout we will set $N=\sqrt{t/2\pi}$ and refer to the sum
\[  B_N(\alpha) = \sum_{1\leq n \leq N} e^{2\pi\ii n \alpha} \equiv \frac{e^{\ii \pi (N+1)\alpha} \sin (N\pi \alpha)}{\sin (\pi \alpha)}. \]
This function is similar to the classical Dirichlet kernel that arises in Fourier analysis. As such, we have the following well-known estimates.
\begin{lemma}\label{dirichletest}
$\|B_N \|_{p} = \mathcal{O}(\log N)$ if $p=1$ and $\|B_N\|_{p} = \mathcal{O} (N^{1-1/p})$ for $p>1$.
\end{lemma}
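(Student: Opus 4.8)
The plan is to work directly from the closed form $|B_N(\alpha)| = |\sin(N\pi\alpha)|/|\sin(\pi\alpha)|$, regarding $B_N$ as an element of $L^p(0,1)$ since the kernel is $1$-periodic. First I would record two elementary pointwise bounds. The triangle inequality applied to the sum $\sum_{1\leq n\leq N} e^{2\pi\ii n\alpha}$ gives $|B_N(\alpha)| \leq N$ for every $\alpha$. For a bound that captures the decay away from the origin, I would use the symmetry of $|\sin(\pi\alpha)|$ about $\alpha=\tfrac{1}{2}$ to restrict attention to $\alpha\in(0,\tfrac{1}{2}]$, on which Jordan's inequality yields $\sin(\pi\alpha)\geq 2\alpha$; combined with $|\sin(N\pi\alpha)|\leq 1$, this gives $|B_N(\alpha)|\leq 1/(2\alpha)$ on $(0,\tfrac{1}{2}]$.

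The key step is then to split the integral at the natural scale $\alpha=1/N$, applying the trivial bound $|B_N|\leq N$ on the short interval $(0,1/N]$ and the decay bound $|B_N|\leq 1/(2\alpha)$ on $[1/N,\tfrac{1}{2}]$. For $p=1$ the first piece contributes $N\cdot(1/N)=\mathcal{O}(1)$, while the second contributes $\int_{1/N}^{1/2}(2\alpha)^{-1}\,\dd\alpha = \tfrac{1}{2}\log(N/2)=\mathcal{O}(\log N)$; doubling for the symmetric half then gives $\|B_N\|_1=\mathcal{O}(\log N)$.

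For $p>1$ the same split gives $\int_0^{1/N} N^p\,\dd\alpha = N^{p-1}$ on the short interval, while on $[1/N,\tfrac{1}{2}]$ one evaluates $\int_{1/N}^{1/2}(2\alpha)^{-p}\,\dd\alpha = \tfrac{2^{-p}}{p-1}\bigl(N^{p-1}-2^{p-1}\bigr)$, which is $\mathcal{O}(N^{p-1})$ since $p>1$ forces the lower endpoint to dominate. Summing the two pieces, $\int_0^{1/2}|B_N|^p\,\dd\alpha = \mathcal{O}(N^{p-1})$, and taking $p$-th roots yields $\|B_N\|_p=\mathcal{O}(N^{1-1/p})$.

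The computation is entirely routine; the only point requiring mild care is the choice of splitting scale $\alpha\sim 1/N$, which is precisely the threshold at which the two competing bounds $N$ and $1/(2\alpha)$ coincide. Verifying that this balance is exactly what converts the would-be power-law growth into a logarithm in the borderline case $p=1$, and that all implied constants are independent of $N$, is the substance of the argument.
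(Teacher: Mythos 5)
Your proof is correct, but there is nothing in the paper to compare it against: the authors state this lemma without proof, remarking only that $B_N$ is essentially the classical Dirichlet kernel and that the estimates are well-known. Your argument --- the two pointwise bounds $|B_N(\alpha)|\leq N$ (triangle inequality) and $|B_N(\alpha)|\leq 1/(2\alpha)$ on $(0,\tfrac12]$ (Jordan's inequality), followed by splitting the integral at the balancing scale $\alpha\sim 1/N$ --- is exactly the standard proof behind that remark, and your computations check out, including the evaluation $\int_{1/N}^{1/2}(2\alpha)^{-p}\,\dd\alpha=\tfrac{2^{-p}}{p-1}\bigl(N^{p-1}-2^{p-1}\bigr)$. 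Two small points are worth recording. First, to reduce to $(0,\tfrac12]$ you need $|B_N(1-\alpha)|=|B_N(\alpha)|$, which follows by conjugating the defining sum (or from the symmetry of \emph{both} $|\sin(N\pi\alpha)|$ and $|\sin(\pi\alpha)|$ when $N$ is an integer); symmetry of the denominator alone is not quite a complete justification. Second, your implied constant for $p>1$ behaves like $(p-1)^{-1/p}$ as $p\to 1^+$, and this blow-up is genuine, since on the set where $|\sin(N\pi\alpha)|\geq\tfrac12$ one has the matching lower bound $|B_N(\alpha)|\gtrsim 1/\alpha$. This is harmless for the lemma as stated, where $p$ is fixed, but it matters downstream: the paper applies the lemma with $p=2k/(2k-1)\to 1$ and asserts in Theorem 1 that the resulting constant is independent of $k$. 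That uniformity does not follow from your proof, nor can it without a loss --- e.g.\ the log-convexity bound $\|B_N\|_p\leq\|B_N\|_1^{1/p}\|B_N\|_\infty^{1-1/p}\lesssim(\log N)^{1/p}N^{1-1/p}$ gives a $k$-independent constant only at the price of a logarithmic factor.
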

Our first result expresses the approximate functional equation for $\zeta(s)$ as an integral equation involving the Hurwitz zeta function $\zeta_1(s,\alpha)$. The proof of Theorem 1 will follow directly from this result.

\begin{lemma}\label{foklemma1}
Let $s=\sigma+\ii t$ and $B_N$ as previously defined. Then we have
\begin{align} \zeta(s) &= \int_0^1 B_N(\alpha)\zeta_1(s,\alpha)\, \dd \alpha + \chi(s) \int_0^1 B_N(-\alpha) \zeta_1(1-s,\alpha)\, \dd \alpha \nonumber \\
&\quad- \int_0^1 B_N(\alpha) \sum_{1\leq n\leq N} (n+\alpha)^{-s}\, \dd \alpha - \chi(s) \int_0^1 B_N(-\alpha) \sum_{1\leq n\leq N} (n+\alpha)^{s-1}\, \dd\alpha \nonumber \\
&\quad  + \mathcal{O}\left( t^{-\sigma/2} \log t\right), \label{weakapprox}
\end{align}
when $0<\sigma<1$. Furthermore.
\begin{equation} \zeta(s) = \int_0^1 B_N(\alpha)\zeta_1(s,\alpha)\, \dd \alpha + \chi(s) \int_0^1 B_N(-\alpha) \zeta_1(1-s,\alpha)\, \dd \alpha + \mathcal{O} \left( t^{-\sigma/2} \log t\right), \quad t\rightarrow \infty, \label{weakapprox2} \end{equation}
where $\chi(s) = 2^{s-1} \pi^s \sin(\pi s/2) \Gamma(1-s)$ with $\Gamma(s)$ denoting the Gamma function and $\zeta_1(s,\alpha)$ denoting the modified Hurwitz zeta function.
\end{lemma}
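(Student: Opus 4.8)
The plan is to combine the classical approximate functional equation for $\zeta(s)$ with a stationary-phase evaluation of the exponential integrals generated by the kernel $B_N$. Throughout I would write $\phi_m(\beta) = 2\pi m\beta - t\log\beta$, so that $\beta^{-s}e^{2\pi\ii m\beta} = \beta^{-\sigma}e^{\ii\phi_m(\beta)}$, and first reduce the four integral terms to exponential integrals. Expanding $B_N(\alpha) = \sum_{1\le m\le N}e^{2\pi\ii m\alpha}$ and $\zeta_1(s,\alpha)=\sum_{n\ge 1}(n+\alpha)^{-s}$, interchanging the (absolutely convergent, for $\sigma>1$) sum and integral, and substituting $\beta=n+\alpha$ — using $e^{2\pi\ii m(\beta-n)}=e^{2\pi\ii m\beta}$ — gives the exact identity
\[ \int_0^1 B_N(\alpha)\zeta_1(s,\alpha)\,\dd\alpha = \sum_{1\le m\le N}\int_1^\infty e^{2\pi\ii m\beta}\beta^{-s}\,\dd\beta, \]
while the term with the finite inner sum becomes $\sum_{1\le m\le N}\int_1^{N+1}e^{2\pi\ii m\beta}\beta^{-s}\,\dd\beta$; the two $\chi(s)$-terms reduce identically after $s\mapsto 1-s$ and $m\mapsto -m$. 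These identities persist for $0<\sigma<1$ by analytic continuation in $s$.

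Next I would evaluate each $\int_1^\infty e^{2\pi\ii m\beta}\beta^{-s}\,\dd\beta$ by stationary phase. The phase $\phi_m$ has a single stationary point $\beta^*_m = t/(2\pi m)=N^2/m$, which for $1\le m\le N$ lies in $[N,\infty)$, hence far from the endpoint $\beta=1$ where $|\phi_m'|\asymp t$ renders the endpoint contribution $O(1/t)$. Since $\phi_m''(\beta^*_m)=(2\pi m)^2/t$, the leading term is $\beta_m^{*-\sigma}\sqrt{2\pi/\phi_m''(\beta_m^*)}\,e^{\ii\phi_m(\beta_m^*)+\ii\pi/4}$; inserting $\beta_m^*=N^2/m$ and invoking the standard asymptotic $\chi(s)=N^{1-2s}e^{\ii(t+\pi/4)}\bigl(1+O(1/t)\bigr)$ collapses it to $\chi(s)\,m^{s-1}$. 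Summing over $m$, and checking that the stationary-phase remainders (governed by $\phi_m'''/(\phi_m'')^2$ and $g'/\phi_m''$ for $g(\beta)=\beta^{-\sigma}$) sum to $O(t^{-\sigma/2})$, I obtain $\int_0^1 B_N(\alpha)\zeta_1(s,\alpha)\,\dd\alpha = \chi(s)\sum_{1\le m\le N}m^{s-1}+O(t^{-\sigma/2})$. The conjugate computation gives $\chi(s)\int_0^1 B_N(-\alpha)\zeta_1(1-s,\alpha)\,\dd\alpha = \sum_{1\le m\le N}m^{-s}+O(t^{-\sigma/2})$, where $\chi(s)\chi(1-s)=1$ is used. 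Adding the two and invoking the Hardy--Littlewood approximate functional equation $\zeta(s)=\sum_{n\le N}n^{-s}+\chi(s)\sum_{n\le N}n^{s-1}+O(t^{-\sigma/2})$ proves \eqref{weakapprox2}.

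For \eqref{weakapprox} it remains to show the two correction terms are each $O(t^{-\sigma/2}\log t)$. By the reduction above they equal $\sum_{m\le N}\int_1^{N+1}e^{2\pi\ii m\beta}\beta^{-s}\,\dd\beta$ and its dual. Now $\beta^*_m=N^2/m$ lies in $[1,N+1]$ only for $m$ within $O(1)$ of $N$; for all other $m$ the integral is non-stationary and is dominated by its $\beta=N+1$ endpoint, where $\phi_m'(N+1)=2\pi(m-N)+O(1)$, giving a bound $\ll N^{-\sigma}/|m-N+1|$, while the $O(1)$-many stationary terms near $m=N$ each contribute $\ll N^{-\sigma}$. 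Summing over $m\le N$ produces the harmonic sum $\sum_{j\le N}1/j\asymp\log N$, hence $O(N^{-\sigma}\log N)=O(t^{-\sigma/2}\log t)$; this logarithm is precisely the manifestation of the bound $\|B_N\|_1=O(\log N)$ from Lemma \ref{dirichletest}. Subtracting the correction terms from \eqref{weakapprox2} then yields \eqref{weakapprox}.

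The hard part will be the uniformity of the stationary-phase analysis as $m\uparrow N$: for the correction terms (and for the truncated integral $\int_{N+1}^\infty$ that appears when the finite sums are combined with the main terms) the stationary point $\beta^*_m=N^2/m$ collides with the truncation boundary $\beta=N$ exactly when $m\approx N$, so the naive stationary-phase and first-derivative estimates degrade there. The remedy I anticipate is to isolate the $O(1)$ range of $m$ nearest $N$ and treat it with a second-derivative (van der Corput) estimate, handle the remaining $m$ with the first-derivative estimate, and sum the resulting $1/|m-N|$ bounds; this is what converts the boundary collision into the single logarithmic loss $\log t$ rather than a loss of a power of $t$. This transition is the same delicate point that underlies the Riemann--Siegel derivation of the approximate functional equation itself.
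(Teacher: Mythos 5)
Your proposal is sound and, once the uniformity details you flag are written out, it proves both \eqref{weakapprox} and \eqref{weakapprox2}; but it is a genuinely different argument from the paper's. The paper quotes two inputs --- the Hardy--Littlewood approximate functional equation \eqref{fok1} for $\zeta$ and Rane's approximate functional equation \eqref{fok2} for $\zeta_1(s,\alpha)$, uniform in $0<\alpha<1$ --- and then needs no oscillatory analysis at all inside Lemma \ref{foklemma1}: the exact orthogonality identity \eqref{fok3} converts $\chi(s)\sum_{n\le N}n^{s-1}$ and $\sum_{n\le N}n^{-s}$ into integrals of $\zeta_1$ against $B_N(\pm\alpha)$, and the errors are controlled by $\|B_N\|_1=\mathcal{O}(\log N)$ (Lemma \ref{dirichletest}) together with $\chi(s)=\mathcal{O}(t^{1/2-\sigma})$; this yields \eqref{weakapprox} first, and \eqref{weakapprox2} only afterwards via the separate estimates of Lemma \ref{foklemma3}. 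You instead quote only \eqref{fok1}, unfold the $\alpha$-integrals exactly into $\sum_{1\le m\le N}\int_1^\infty e^{2\pi\ii m\beta}\beta^{-s}\,\dd\beta$ (and its conjugate), and evaluate these by stationary phase --- in effect re-deriving the integrated content of Rane's equation rather than citing it --- so you reach \eqref{weakapprox2} first and deduce \eqref{weakapprox} from it, the reverse order. What your route buys: it is self-contained modulo the classical equation for $\zeta$, it never needs uniformity in $\alpha$ of any quoted asymptotics (the $\alpha$-integration is performed exactly before any asymptotics), and your main-term computation checks out (the stationary value collapses to $\chi(s)m^{s-1}$ via $\chi(s)=N^{1-2s}e^{\ii(t+\pi/4)}\bigl(1+\mathcal{O}(1/t)\bigr)$, and the per-$m$ remainders, of size $\mathcal{O}(N^{-2\sigma}m^{\sigma-1})$, indeed sum to $\mathcal{O}(t^{-\sigma/2})$). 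What it costs is exactly the boundary collision $\beta_m^*=N^2/m\approx N$ for $m$ near $N$ that you identify as the hard part; the paper's proof of Lemma \ref{foklemma1} avoids this issue entirely because Rane's cited result absorbs it, and it resurfaces only in Lemma \ref{foklemma3}. Your remedy --- isolate the $\mathcal{O}(1)$ values of $m$ nearest $N$, apply the second-derivative (van der Corput) test there and the first-derivative test elsewhere, then sum the $1/|m-N|$ bounds to produce the $\log t$ --- is the standard and correct one; it is arguably more careful than the paper's own handling of the analogous point in Lemma \ref{foklemma3}, where for $n=\lfloor N\rfloor$ the denominator $\tfrac{t}{2\pi N}-n=N-n$ can be arbitrarily small when $N$ is close to an integer.
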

Let us first establish \eqref{weakapprox}. The identity in \eqref{weakapprox2} will be a consequence of this.
\begin{proof}
First we recall the approximate functional equations for $\zeta(s)$ and $\zeta_1(s,\alpha)$ (see \cite{rane1983hurwitz} and references therein)
\begin{equation} \zeta(\sigma+\ii t) = \sum_{1\leq n \leq N} n^{-s} + \chi(s) \sum_{1\leq n \leq N} n^{s-1} + \mathcal{O}\left( t^{-\sigma/2}\right), \label{fok1} \end{equation}
and
\begin{equation} \zeta_1(s,\alpha) = \sum_{1\leq n \leq N} (n+\alpha)^{-s} + \chi(s) \sum_{1\leq n \leq N} e^{-2\pi\ii n\alpha} n^{s-1} + \mathcal{O}\left( t^{-\sigma/2}\right), \label{fok2} \end{equation}
uniformly in $0<\alpha<1$. The following identity is valid
\begin{equation} \sum_{1\leq n \leq N} n^z = \int_0^1 B_N(\alpha) \left( \sum_{1\leq m \leq N} e^{-2\pi\ii m\alpha} m^z \right) \dd \alpha, \quad z\in \mathbf{C}. \label{fok3}\end{equation}
Indeed the left hand side of \eqref{fok3} can be rewritten in the form
\begin{align*}  \sum_{1\leq m,n\leq N} \delta_{mn} m^z &= \sum_{1\leq m,n\leq N} m^z \int_0^1 e^{2\pi\ii (n-m)\alpha}\, \dd \alpha \\
 &= \int_0^1 \left( \sum_{1\leq n\leq N} e^{2\pi\ii n\alpha} \right) \left( \sum_{1\leq m\leq N} e^{-2\pi\ii m\alpha} m^z\right) \dd \alpha,
 \end{align*}
which is the right hand side of \eqref{fok3}. Using $z=s-1$ and employing \eqref{fok2} we find
\begin{equation} \chi(s) \sum_{1\leq n\leq N} n^{s-1} = \int_0^1 B_N(\alpha) \left[ \zeta_1(s,\alpha) - \sum_{1\leq n\leq N} (n+\alpha)^{-s} \right]\dd \alpha  + \int_0^1 \left[ \mathcal{O} \left( t^{-\sigma/2}\right) B_N(\alpha) \right]\dd \alpha. \label{fok4} \end{equation}
We note that
\[ \left|\int_0^1 \left[ \mathcal{O} \left( t^{-\sigma/2}\right) B_N(\alpha) \right] \dd\alpha \right| \lesssim t^{-\sigma/2} \int_0^1 |B_N(\alpha)|\, \dd \alpha = \mathcal{O} (t^{-\sigma/2}\log t). \]
Equation \eqref{fok3} implies
\begin{equation} \sum_{1\leq n\leq N} n^z = \int_0^1 B_N(-\alpha) \left( \sum_{1\leq m\leq N} e^{2\pi\ii m\alpha} m^z \right) \dd \alpha, \quad u\in\mathbf{C}. \label{fok5} \end{equation}
Replacing $s$ by $1-s$ in \eqref{fok2} and taking the complex conjugate of the resulting equation we find
\begin{equation} \zeta_1(1-s,\alpha) = \sum_{1\leq n\leq N} (n+\alpha)^{s-1} + \chi(1-s) \sum_{1\leq n\leq N} e^{2\pi\ii n\alpha} n^{-s} + \mathcal{O} \left( t^{-(1-\sigma)/2} \right). \label{fok6} \end{equation}
Using \eqref{fok5} with $z=-s$ and employing \eqref{fok6} we find
\begin{equation} \sum_{1\leq n\leq N} n^{-s} = \int_0^1 B_N(-\alpha) \left[ \chi(s) \zeta_1(1-s,\alpha) - \chi(s) \sum_{1\leq n\leq N} (n+\alpha)^{s-1} \right]\dd\alpha + \mathcal{O} \left( t^{-(1-\sigma)/2} \log t\right). \label{fok7} \end{equation}
Here we have used $\chi(s) = \mathcal{O}\left( t^{1/2-\sigma} \right)$ and a similar estimate as before
\[ \left| \chi(s) \int_0^1 \mathcal{O}\left(t^{-(1-\sigma)/2}\right) B_N(-\alpha)\, \dd\alpha \right| \lesssim t^{-\sigma/2} \int_0^1 |B_N(-\alpha)|\, \dd \alpha = \mathcal{O}\left(t^{-\sigma/2} \log t\right). \]
Using equations \eqref{fok4} and \eqref{fok5} in \eqref{fok1} we arrive at the result in the lemma.
\end{proof}

\begin{lemma}\label{foklemma3}
With $B_N(\alpha)$ defined as before we have
\[ \int_0^1 B_N(\alpha) \sum_{1\leq n \leq N} (n+\alpha)^{-s}\, \dd \alpha = \frac{\ii}{2\pi N^s} \sum_{1\leq n\leq N} \frac{1}{\frac{t}{2\pi N} - n} + \frac{1}{2\pi\ii} \sum_{1\leq n \leq N} \frac{1}{ \frac{t}{2\pi} -n } + \mathcal{O}\left( t^{-(1+\sigma)/2} \right). \]
\end{lemma}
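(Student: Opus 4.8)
The plan is to first collapse the double sum. Writing $B_N(\alpha)=\sum_{1\le m\le N}e^{2\pi\ii m\alpha}$ and substituting $\beta=n+\alpha$ in the $n$-th summand, the integer periodicity $e^{2\pi\ii m(\beta-n)}=e^{2\pi\ii m\beta}$ lets the unit intervals $[n,n+1]$ telescope into a single range, so that
\[ \int_0^1 B_N(\alpha)\sum_{1\le n\le N}(n+\alpha)^{-s}\,\dd\alpha=\int_1^{N+1}B_N(\beta)\,\beta^{-s}\,\dd\beta=\sum_{m=1}^N\int_1^{N+1}e^{2\pi\ii m\beta}\beta^{-s}\,\dd\beta. \]
I would then analyse each $I_m:=\int_1^{N+1}e^{2\pi\ii m\beta}\beta^{-s}\,\dd\beta$ by writing the integrand as $\beta^{-\sigma}e^{\ii\psi_m(\beta)}$ with $\psi_m(\beta)=2\pi m\beta-t\log\beta$ and integrating by parts against $e^{\ii\psi_m}$. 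Since $\psi_m'(\beta)=2\pi m-t/\beta$ vanishes exactly at the saddle $\beta_*=t/(2\pi m)$, this produces boundary terms at $\beta=1$ and $\beta=N+1$ plus a remainder $R_m$ built from the derivative of the amplitude $\beta^{-\sigma}/\psi_m'$.

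The endpoint $\beta=1$ is clean and delivers the second asserted sum exactly. Indeed $e^{2\pi\ii m}=1$ and $\psi_m'(1)=2\pi(m-\tfrac{t}{2\pi})$, while $t/(2\pi)=N^2$, so no $m\le N$ has a saddle near $\beta=1$ (one always has $\beta_*=N^2/m\ge N$). Collecting the $\beta=1$ contributions over $m$ gives precisely $\tfrac1{2\pi\ii}\sum_{n}\bigl(\tfrac{t}{2\pi}-n\bigr)^{-1}$, with no approximation.

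The first asserted sum must come from the opposite end of the range, $\beta\in[N,N+1]$, and this is the crux and the main obstacle. Because $t/(2\pi N)=N$, the saddle $\beta_*=N^2/m$ enters $[1,N+1]$ exactly when $m\ge N^2/(N+1)=N-1+\tfrac1{N+1}$, i.e. for $m=\lfloor N\rfloor$ and marginally $m=\lfloor N\rfloor-1$; for these resonant $m$ the amplitude $\beta^{-\sigma}/\psi_m'$ is singular and the integration by parts is invalid, its formal $\beta=N+1$ boundary term being of the same (large) order as the main term itself. For these one or two values of $m$ I would instead evaluate $I_m$ about $\beta_*\approx N$, where $\beta_*^{-s}\approx N^{-s}$ supplies the prefactor and $\psi_m''(\beta_*)=t/\beta_*^2\asymp 2\pi$ fixes the normalisation. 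Since this saddle sits within $\mathcal{O}(1)$ of the endpoint $\beta=N+1$, comparable to its width $\asymp(\psi_m'')^{-1/2}$, the saddle and endpoint are not well separated, so a uniform (incomplete-Gamma / error-function) treatment of $I_m$ is the natural tool. The content of the lemma is that the resonant saddle contributions, assembled with the $\beta=N+1$ boundary terms of the non-resonant $m$, combine to $\tfrac{\ii}{2\pi N^s}\sum_n\bigl(\tfrac{t}{2\pi N}-n\bigr)^{-1}$ up to acceptable error; in particular the resonance is what converts the $(N+1)$-data that the endpoint alone would produce into the $N$-data ($N^{-s}$ and $t/(2\pi N)=N$) of the stated sum.

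It remains to estimate the leftover. For $m$ bounded away from the resonance one has $|\psi_m'|\asymp t/\beta$ throughout $[1,N+1]$ and the pointwise bound $|R_m|\lesssim N^{-1-\sigma}$; but $\psi_m'$ degenerates near $\beta=N+1$ as $m\uparrow\lfloor N\rfloor$, so the terms nearest the resonance require the sharper second-derivative (van der Corput) estimate. In any case, summing $\sim N$ remainders each of size $N^{-1-\sigma}$ forces me to use cancellation in $m$ rather than the triangle inequality: since $t^{-(1+\sigma)/2}\asymp N^{-1-\sigma}$, recovering the stated error demands a gain of a full factor of $N$, which I would extract by re-summing $\sum_m e^{2\pi\ii m\beta}=B_N(\beta)$ inside the remainder before estimating. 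This cancellation, together with the resonant stationary-phase analysis above, is where essentially all of the work lies.
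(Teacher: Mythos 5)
You set up the problem exactly as the paper does: periodicity of $B_N$ collapses the double sum to $\int_1^{N+1}B_N(\beta)\beta^{-s}\,\dd\beta$, and the $\beta=1$ boundary terms of the integration by parts give the second sum. The genuine gap is that the first sum, $\tfrac{\ii}{2\pi N^{s}}\sum_{n}\bigl(\tfrac{t}{2\pi N}-n\bigr)^{-1}$, is never actually derived: you defer it to a uniform (error-function) treatment of the resonant integrals with $m\approx N$ recombined with $\beta=N+1$ endpoint data, and you defer the error bound to an unspecified cancellation-in-$m$ argument; neither is executed, and as written these are programs rather than proofs. Moreover the difficulty you fight is self-inflicted: it arises only because you keep the subinterval $[N,N+1]$ --- the only place stationary points $\beta_*=N^2/m$ can live --- inside the range of the integration by parts. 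The paper splits $\int_1^{N+1}=\int_1^{N}+\int_{N}^{N+1}$ \emph{before} integrating by parts. On $[1,N]$ no phase $2\pi m\beta-t\log\beta$ has an interior stationary point (since $\beta_*=N^2/m\geq N$ for $m\leq N$), so the integration by parts is legitimate for every $m$, and its boundary contribution at $\beta=N$ is precisely $\tfrac{\ii}{2\pi}N^{-s}\bigl(\tfrac{t}{2\pi N}-m\bigr)^{-1}$: the first sum is a plain endpoint term at $\beta=N$, requiring no saddle analysis and no conversion of ``$(N+1)$-data'' into ``$N$-data''. The discarded piece is estimated with no oscillation at all, $\bigl|\int_N^{N+1}B_N(\beta)\beta^{-s}\,\dd\beta\bigr|\lesssim N^{-\sigma}\int_0^1|B_N(\alpha)|\,\dd\alpha=\mathcal{O}(t^{-\sigma/2}\log t)$.

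On the remainder: the paper does not need the factor-of-$N$ cancellation you call for. A \emph{second} integration by parts produces a remainder $E_m$ weighted by $(t-2\pi m\beta)^{-2}$ and $(t-2\pi m\beta)^{-3}$, and the second mean value theorem gives $E_m=\mathcal{O}\bigl(N^{-1-\sigma}(N-m)^{-2}\bigr)+\mathcal{O}\bigl(mN^{-1-\sigma}(N-m)^{-3}\bigr)$, which is summed over $m$ by the triangle inequality thanks to the decay in $N-m$. Two caveats in your favour. First, your instinct that the stated error $\mathcal{O}(t^{-(1+\sigma)/2})$ is out of reach of these techniques is sound: the paper's own proof only yields $\mathcal{O}(t^{-\sigma/2}\log t)$ contributions (the trivial $[N,N+1]$ piece, and the near-resonant remainder $E_{\lfloor N\rfloor}$, which is of size about $N^{-\sigma}$ when $N-\lfloor N\rfloor\asymp 1$), and this weaker error is all that is ever used --- immediately after the lemma even the two ``main'' sums are absorbed into $\mathcal{O}(t^{-\sigma/2}\log t)$. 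Second, the identity as stated is delicate when $N$ is an integer (the term $m=N$ in the first sum is then singular, while for non-integer $N$ extra phases $e^{2\pi\ii mN}$ appear in the $\beta=N$ boundary terms), a point both you and the paper pass over. In short: correct opening moves, but the core of your argument attacks, without completing, a difficulty that the correct decomposition removes.
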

\begin{proof}
Using the periodicity of $B_N(\alpha)$ on $\mathbf{R}/\mathbf{Z}$ we find
\begin{align} \int_0^1 B_N(\alpha) \sum_{1\leq n \leq N} (n+\alpha)^{-s}\, \dd \alpha &= \sum_{1\leq n\leq N} \int_n^{n+1} B_N(\alpha) \alpha^{-s}\, \dd \alpha \nonumber \\
&= \int_1^N B_N(\alpha) \alpha^{-s}\, \dd \alpha + \int_{N}^{N+1} B_N(\alpha) \alpha^{-s}\, \dd\alpha. \label{fok11}
\end{align}
We next estimate the first integral on the right hand side of \eqref{fok11}, which we denote by $I_N(s)$:
\begin{equation} I_N(s) = \sum_{1\leq n\leq N} \int_1^N e^{2\pi\ii n \alpha - \ii t \log \alpha} \alpha^{-\sigma}\, \dd \alpha, \qquad 0<\sigma < 1. \label{fok12} \end{equation}
The integral in the above sum \emph{does not} have any stationary points. Indeed, candidates for stationary points are the points $\alpha^*$ where
\[ \alpha^* = \frac{t}{2\pi n} > \frac{N^2}{n} \]
thus, since $1\leq n \leq N$, $\alpha^*$ is outside the range of integration. Hence the above integral can be estimated using integration by parts:
\begin{align} \int_1^N e^{2\pi\ii n\alpha - \ii t \log \alpha} \alpha^{-\sigma}\, \dd \alpha &= \ii \int_1^N \frac{\dd}{\dd \alpha} \left( e^{2\pi \ii n\alpha - \ii t \log \alpha} \right) \frac{\alpha^{1-\sigma}}{t-2\pi n \alpha}\, \dd \alpha \nonumber \\
&= -\frac{1}{2\pi\ii} \left( \frac{N^{-s}}{\frac{t}{2\pi N} - n} - \frac{1}{\frac{t}{2\pi} -n} \right) + E_n(s), \label{fok13}
\end{align}
where
\[ E_n(s) = \int_1^N \frac{\dd}{\dd \alpha} \left( e^{2\pi \ii n\alpha - \ii t \log \alpha} \right) \left[ \frac{(1-\sigma) \alpha^{1-\sigma}}{(t-2\pi n\alpha)^2} + \frac{2\pi n \alpha^{2-\sigma}}{(t-2\pi n \alpha)^3} \right] \dd \alpha. \]
There error term $E_n(s)$ can be evaluated using the second mean value theorem for integrals. For instance, for some $\xi \in (1,N)$ we have
\begin{align*} \left| \Re E_n(s) \right| &= \left| \left[\frac{(1-\sigma) N^{1-\sigma}}{(t-2\pi nN)^2} + \frac{2\pi n N^{2-\sigma}}{(t-2\pi n N)^3}\right] \Re \int_\xi^N \frac{\dd}{\dd \alpha} \left( e^{2\pi \ii n\alpha - \ii t \log \alpha} \right)\dd \alpha \right| \\
&= \mathcal{O}\left( \frac{ N^{1-\sigma}}{(t-2\pi nN)^2} \right) + \mathcal{O}\left( \frac{ n N^{2-\sigma}}{(t-2\pi n N)^3} \right),
\end{align*}
and similarly for $\Im E_n(s)$. It is now straightforward to show
\begin{equation} \sum_{1\leq n \leq N} E_n(s) = \mathcal{O} \left(t^{-(1+\sigma)/2}\right). \label{est1} \end{equation}
We also have the elementary estimate
\[ \left| \int_{N}^{N+1} B_N(\alpha) \alpha^{-s}\, \dd \alpha \right| \lesssim t^{-\sigma/2} \int_0^1 |B_N(\alpha)|\, \dd \alpha = \mathcal{O}\left( t^{-\sigma/2} \log t\right). \]
This combined with \eqref{est1} and \eqref{fok13} gives the desired result.
\end{proof}
The leading order terms in the above expansion are $\mathcal{O}\left( t^{-\sigma/2}\log t\right)$, thus they can be absorbed into the error term. Indeed, using Lemma \ref{foklemma3} it is now straightforward to see that
\[ \int_0^1 B_N(\alpha) \sum_{1\leq n\leq N} (n+\alpha)^{-s}\, \dd \alpha = \mathcal{O} \left( t^{-\sigma/2} \log t\right). \]
Using similar arguments we also find
\[ \chi(s) \int_0^1 B_N(-\alpha) \sum_{1\leq n\leq N} (n+\alpha)^{s-1}\, \dd\alpha  = \mathcal{O} \left( t^{-\sigma/2} \log t\right). \]
Combining this observation with the result of Lemma \ref{foklemma1} we conclude that
\[ \zeta(s) = \int_0^1 B_N(\alpha)\zeta_1(s,\alpha)\, \dd \alpha + \chi(s) \int_0^1 B_N(-\alpha) \zeta_1(1-s,\alpha)\, \dd \alpha + \mathcal{O} \left( t^{-\sigma/2} \log t\right), \quad t\rightarrow \infty \]
for $0<\sigma<1$. The proof to Theorem 1 now follows from Lemmas \ref{foklemma1}--\ref{foklemma3} with $s=1/2$ and the application of H\"older's inequality with exponents
\[ p=2k, \quad q= \frac{2k}{2k-1}. \]
In particular, using the estimates in Lemma \ref{dirichletest} we have
\begin{align*} \left| \int_0^1 B_N(\alpha) \zeta_1(s,\alpha)\, \dd \alpha \right| &\leq \left( \int_0^1 |B_N(\alpha)|^{2k/(2k-1)}\, \dd \alpha \right)^{1-1/2k} \left( \int_0^1 |\zeta_1(s,\alpha)|^{2k}\, \dd \alpha \right)^{1/2k} \\
&\lesssim N^{1/2k} I_k(t)^{1/2k} \\
&\lesssim t^{1/4k} I_k(t)^{1/2k}.
\end{align*}
This gives rise to the result in Theorem 1.

\section{Relations among products of the Hurwitz zeta functions}

\subsection{Quadratic formula}
\begin{lemma} \label{lemma4}
Let $\zeta_1(u,\alpha)$,  $u \in \mathbf{C}$, $\alpha >0$, denote the modified Hurwitz function, i.e.,

\begin{equation} \zeta_1(u,\alpha)= \sum^\infty_{m=1} \frac{1}{(m+\alpha)^u}, \quad \alpha > 0 \quad \Re u>1.\label{(2.1)} \end{equation}
Then, for $\Re u>1, \Re v>1,$
\begin{equation} \int^1_0  \zeta_1(v,\alpha) \zeta_1(u,\alpha)\, \dd \alpha = \frac{1}{u+v-1} + \int^\infty_1  \alpha^{-v}\zeta_1(u,\alpha)\, \dd \alpha + \int^\infty_1  \alpha^{-u}\zeta_1(v,\alpha)\, \dd \alpha.  \\
     \label{(2.2)}\end{equation} \end{lemma}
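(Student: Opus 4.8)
The plan is to expand the product as a double Dirichlet series, integrate term by term over $[0,1]$, and then use a shift identity for $\zeta_1$ to reorganise the result into the three pieces on the right of \eqref{(2.2)}. Because $\Re u>1$ and $\Re v>1$, the integral of the absolute values is dominated by $\int_0^1 \zeta_1(\Re v,\alpha)\,\zeta_1(\Re u,\alpha)\,\dd\alpha<\infty$, so every interchange of summation and integration below is justified by Fubini--Tonelli. This convergence bookkeeping is the only analytic point requiring attention, and it is routine.

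First I would write
\[ \int_0^1 \zeta_1(v,\alpha)\zeta_1(u,\alpha)\,\dd\alpha = \sum_{m=1}^\infty \int_0^1 (m+\alpha)^{-v}\,\zeta_1(u,\alpha)\,\dd\alpha, \]
and in the $m$-th summand substitute $\beta=m+\alpha$, so that the integral runs over $[m,m+1]$ and the second factor becomes $\zeta_1(u,\beta-m)$. The key algebraic step is the shift identity
\[ \zeta_1(u,\beta-m) = \zeta_1(u,\beta) + \sum_{j=0}^{m-1}(\beta-j)^{-u}, \]
obtained by re-indexing the defining series $\sum_{n\geq1}(n+\beta-m)^{-u}$: the terms with $n\geq m+1$ reassemble into $\zeta_1(u,\beta)$, while the finitely many terms $n=1,\dots,m$ contribute the arguments $\beta-(m-1),\dots,\beta-1,\beta$.

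Substituting this identity splits the sum into two parts. The part carrying $\zeta_1(u,\beta)$ telescopes across the intervals $[m,m+1]$ into $\int_1^\infty \beta^{-v}\zeta_1(u,\beta)\,\dd\beta$, which is the first integral on the right of \eqref{(2.2)}. For the remaining double sum $\sum_{m\geq1}\sum_{j=0}^{m-1}\int_m^{m+1}\beta^{-v}(\beta-j)^{-u}\,\dd\beta$ I would swap the order of summation (for fixed $j\geq0$ one has $m\geq j+1$) to obtain $\sum_{j\geq0}\int_{j+1}^\infty \beta^{-v}(\beta-j)^{-u}\,\dd\beta$, and then set $\gamma=\beta-j$ to reach $\sum_{j\geq0}\int_1^\infty (\gamma+j)^{-v}\gamma^{-u}\,\dd\gamma$.

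Finally I would separate the $j=0$ term, namely $\int_1^\infty \gamma^{-u-v}\,\dd\gamma=\tfrac{1}{u+v-1}$ (convergent since $\Re(u+v)>2$), and recognise the sum over $j\geq1$ as $\int_1^\infty \gamma^{-u}\zeta_1(v,\gamma)\,\dd\gamma$. Collecting the three contributions reproduces \eqref{(2.2)} exactly. I expect no serious obstacle: the substantive moves are the shift identity and the re-indexing of the double sum, while the only thing needing explicit care is verifying absolute convergence so that all the rearrangements are legitimate.
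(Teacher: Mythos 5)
Your proof is correct, and it takes a genuinely different route from the paper's. You argue directly from the Dirichlet series: unfold $\int_0^1$ onto the intervals $[m,m+1]$, apply the shift identity $\zeta_1(u,\beta-m)=\zeta_1(u,\beta)+\sum_{j=0}^{m-1}(\beta-j)^{-u}$, and re-index the double sum; absolute convergence (everything is dominated by the case $u\mapsto\Re u$, $v\mapsto\Re v$) legitimizes every interchange, as you note. The paper instead inserts the integral representation $\zeta_1(s,\alpha)=\frac{1}{\Gamma(s)}\int_0^\infty\frac{e^{-\alpha\rho}\rho^{s-1}}{e^\rho-1}\,\dd\rho$, performs the $\alpha$-integration to obtain a double integral in $(\rho_1,\rho_2)$ against the kernel $\frac{1-e^{-(\rho_1+\rho_2)}}{(e^{\rho_1}-1)(e^{\rho_2}-1)}$, splits that kernel by the identity $\frac{1-e^{-(\rho_1+\rho_2)}}{(e^{\rho_1}-1)(e^{\rho_2}-1)}=e^{-(\rho_1+\rho_2)}\left[1+\frac{1}{e^{\rho_1}-1}+\frac{1}{e^{\rho_2}-1}\right]$, and identifies the three resulting pieces with the three terms on the right via scaling arguments with Gamma integrals. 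The two arguments are really the same decomposition in different clothing: your split of the double sum into the diagonal $n=m$ (giving $\frac{1}{u+v-1}$), the triangle $n>m$ (giving $\int_1^\infty\beta^{-v}\zeta_1(u,\beta)\,\dd\beta$), and the triangle $n<m$ is exactly what the paper's kernel identity encodes at the level of generating functions. What your version buys is elementarity and transparency: no Gamma functions or integral representations, and each term on the right is visibly a piece of the double sum. What the paper's version buys is a mechanism that scales: the analogous kernel identity with four factors $1/R_i$ drives the quadruple formula of Lemma 5 (and the cubic analogue), where tracking the corresponding index decompositions by hand would be combinatorially heavier.
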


\begin{proof}  Let $q_0(v,u)$ denote the LHS of equation \eqref{(2.2)}.

Using the integral representation of the modified Hurwitz function, namely
\begin{equation}\zeta_1(s,\alpha) = \frac{1}{\Gamma(s)} \int^{\infty}_{0}  \frac{e^{-\alpha\rho}\rho^{s-1}}{e^\rho -1}\, \dd \rho, \quad \alpha >0, \quad \Re s>1, \label{(2.3)}\end{equation}
we find

$$q_0(v,u) = \frac{1}{\Gamma(u)\Gamma(v)} \int^{\infty}_{0} \dd\rho_1 \int^{\infty}_{0} \dd \rho_2\, \frac{\rho^{v-1}_1 \rho^{u-1}_2}{\rho_1+\rho_2} \frac{1-e^{-(\rho_1+\rho_2)}}{(e^{\rho_1}-1)(e^{\rho_2}-1)}.$$
Inserting in this equation the identity

\begin{equation}\frac{1-e^{-(\rho_1+\rho_2)}}{(e^{\rho_1}-1)(e^{\rho_2}-1)} = e^{-(\rho_1+\rho_2)}\left[1 + \frac{1}{e^{\rho_2}-1} + \frac{1}{e^{\rho_1}-1}\right], $$
we find
$$q_0(v,u) =J_0(v,u) + I_0(v,u) + I_0(u,v), \label{(2.5)} \end{equation}
where
\begin{equation}J_0(v,u) = \frac{1}{\Gamma(u)\Gamma(v)} \int^{\infty}_{0} \dd\rho_1 \int^{\infty}_{0} \dd \rho_2\, \frac{\rho^{v-1}_1\rho^{u-1}_2e^{-(\rho_1+\rho_2)}}{\rho_1+\rho_2}, \label{(2.6)} \end{equation}
and
\begin{equation} I_0(v,u) = \frac{1}{\Gamma(u)\Gamma(v)} \int^{\infty}_{0} \dd\rho_1 \int^{\infty}_{0} \dd \rho_2\, \frac{\rho^{v-1}_1\rho^{u-1}_2e^{-(\rho_1+\rho_2)}}{(\rho_1+\rho_2)(e^{\rho_2}-1)}. \label{(2.7)}\end{equation}
We will next show that
\begin{equation} J_0(v,u) = \frac{1}{u+v-1}. \label{(2.8)}\end{equation}
Indeed, using the integral representations of $\Gamma(u)$ and $\Gamma(v)$, we find
\begin{equation}
\Gamma(u)\Gamma(v)=\int^{\infty}_{0} \dd\rho_1 \int^{\infty}_{0} \dd \rho_2\,  \rho_1^{u-1}\rho_2^{v-1} e^{-(\rho_1+\rho_2)}.   \label{(2.9)}
\end{equation}
Replacing in the RHS of \eqref{(2.9)} $\rho_1$ and $\rho_2$ by $ \alpha x_1$ and $\alpha x_2$, multiplying the resulting expression by $\alpha^{-(u+v)}$,
and  integrating with respect to $\alpha$ from
$\alpha=1$ to $\alpha=\infty$,
 we find
$$
\frac{\Gamma(u)\Gamma(v)}{u+v-1} =\int^{\infty}_{0} \dd x_1 \int^{\infty}_{0} \dd x_2\,\frac{ x_1^{u-1}x_2^{v-1} e^{-(x_1+x_2)}}{x_1+x_2},
$$
which gives \eqref{(2.8)}.

Finally, we will show that
\begin{equation}I_0(v,u) = \int^\infty_1 \alpha^{-v} \zeta_1(u,\alpha)\, \dd \alpha. \label{(2.10)} \end{equation}
Indeed, using the integral representations of $\Gamma(v)$ and of $\zeta_1(u,\alpha)$ we find
\begin{equation} \Gamma(v)\zeta_1(u,\alpha) = \int^{\infty}_{0}   e^{-\rho} \rho^{v-1}\, \dd \rho \times \frac{1}{\Gamma(u)} \int^{\infty}_{0}  \  \rho^{u-1}_2 \frac{e^{-\alpha\rho_2}}{e^{\rho_2}-1}\,\dd \rho_2. \label{(2.11)} \end{equation}
Replacing in the RHS of the above equation $\rho$ by $\alpha\rho_1$, multiplying the resulting equation by $\alpha^{-v}/\Gamma(v)$, and then integrating with respect to $\alpha$ from $\alpha=1$ to $\alpha=\infty$, we find \eqref{(2.10)}.

Inserting in equation \eqref{(2.5)} the expressions for $J_0(v,u)$, for $I_0(v,u)$ and for $ I_0(u,v)$ from equations \eqref{(2.8)} and \eqref{(2.10)}, we find \eqref{(2.2)}. \end{proof}


Eq. \eqref{f1.12} can be derived following the approach used in Lemmas \ref{lemma4} and \ref{lemma5}, and thus it is omitted.

\subsection{Quadruple formula}

\begin{lemma}\label{lemma5}
Let $\zeta_1(s,\alpha)$ be defined as in \eqref{(2.3)}. Then for $\Re u_i>1$, $i=1,2,3,4$, the following identity is valid:
\begin{multline} \int_0^1 \prod_{i=1}^4 \zeta_1(u_i,\alpha)\, \dd \alpha = \frac{1}{u_1+u_2+u_3+u_4-1} + \sum_{\mathrm{perms}} \int_1^\infty \alpha^{-(u_1+u_2+u_3)}\zeta_1(u_4,\alpha)\, \dd \alpha \\
+\sum_{\mathrm{perms}} \int_1^\infty \alpha^{-(u_1+u_2)} \zeta_1(u_3,\alpha)\zeta_1(u_4,\alpha)\, \dd \alpha + \sum_{\mathrm{perms}} \int_1^\infty \alpha^{-u_1} \zeta_1(u_2,\alpha) \zeta_1(u_3,\alpha) \zeta_1(u_4,\alpha)\, \dd \alpha ,\label{quad}
 \end{multline}
 where the sums run over permutations of $(1,2,3,4)$ so that the first and third sums contain $4$ terms whilst the second sum contains $6$ terms.
\end{lemma}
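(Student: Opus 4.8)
The plan is to mirror the proof of Lemma~\ref{lemma4} almost verbatim, replacing the two-fold product by a four-fold one. First I would insert the integral representation \eqref{(2.3)} into $\prod_{i=1}^4\zeta_1(u_i,\alpha)$ and perform the elementary integration over $\alpha\in(0,1)$. Writing $R=\rho_1+\rho_2+\rho_3+\rho_4$ and using $\int_0^1 e^{-\alpha R}\,\dd\alpha=(1-e^{-R})/R$, this rewrites the left-hand side of \eqref{quad} as
\[ \frac{1}{\prod_{i=1}^4\Gamma(u_i)}\int_{(0,\infty)^4}\frac{\prod_{i=1}^4\rho_i^{u_i-1}}{R}\,\frac{1-e^{-R}}{\prod_{i=1}^4(e^{\rho_i}-1)}\,\dd\rho_1\cdots\dd\rho_4, \]
whose absolute convergence, in particular near $\rho_i=0$ where $(e^{\rho_i}-1)^{-1}\sim\rho_i^{-1}$, is guaranteed by the hypothesis $\Re u_i>1$. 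This same bound justifies all subsequent uses of Fubini's theorem to interchange the $\alpha$-integration with the $\rho$-integrations.

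The key step is the algebraic identity generalising the one used in the quadratic case. Writing $e^{\rho_i}/(e^{\rho_i}-1)=1+(e^{\rho_i}-1)^{-1}$ and noting $e^{-R}\prod_i e^{\rho_i}=1$, one obtains
\[ \frac{1-e^{-R}}{\prod_{i=1}^4(e^{\rho_i}-1)} = e^{-R}\sum_{S\subsetneq\{1,2,3,4\}}\prod_{i\in S}\frac{1}{e^{\rho_i}-1}, \]
where the sum runs over all \emph{proper} subsets $S$: the empty set contributes the bare $e^{-R}$, and the full-set term is exactly the piece cancelled by the $-e^{-R}$ arising from the numerator $1-e^{-R}$. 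This decomposes the integral into $2^4-1=15$ pieces, one per proper subset.

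I would then evaluate the pieces exactly as $J_0$ and $I_0$ were handled. The empty-set piece reproduces the $J_0$ computation: using $\prod_i\Gamma(u_i)=\int_{(0,\infty)^4}\prod_i\rho_i^{u_i-1}e^{-R}\,\dd\rho$, the substitution $\rho_i=\alpha x_i$, multiplication by $\alpha^{-(u_1+u_2+u_3+u_4)}$, and integration over $\alpha\in(1,\infty)$ yield $\int_1^\infty\alpha^{-(u_1+u_2+u_3+u_4)}\,\dd\alpha=(u_1+u_2+u_3+u_4-1)^{-1}$, the first term of \eqref{quad}. For a nonempty proper subset $S$, I would start from
\[ \Big(\prod_{j\notin S}\Gamma(u_j)\Big)\prod_{i\in S}\zeta_1(u_i,\alpha) = \prod_{j\notin S}\int_0^\infty\rho_j^{u_j-1}e^{-\rho_j}\,\dd\rho_j\;\prod_{i\in S}\frac{1}{\Gamma(u_i)}\int_0^\infty\rho_i^{u_i-1}\frac{e^{-\alpha\rho_i}}{e^{\rho_i}-1}\,\dd\rho_i, \]
substitute $\rho_j=\alpha x_j$ for $j\notin S$, multiply by $\alpha^{-\sum_{j\notin S}u_j}/\prod_{j\notin S}\Gamma(u_j)$, and integrate over $\alpha\in(1,\infty)$. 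The $\alpha$-integral $\int_1^\infty e^{-\alpha R}\,\dd\alpha=e^{-R}/R$ then reconstructs precisely the $S$-piece of the decomposition above, proving it equals $\int_1^\infty\alpha^{-\sum_{j\notin S}u_j}\prod_{i\in S}\zeta_1(u_i,\alpha)\,\dd\alpha$.

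Finally I would collect terms by the cardinality of $S$: the $\binom{4}{1}=4$ singletons give the first sum, the $\binom{4}{2}=6$ pairs the second sum, and the $\binom{4}{3}=4$ triples the third sum of \eqref{quad}, matching the stated multiplicities $4,6,4$. The main obstacle is not analytic—every integral converges absolutely for $\Re u_i>1$—but combinatorial: one must verify the proper-subset identity with the correct cancellation of the full-set term, and then check for each $S$ that the exponent $-\sum_{j\notin S}u_j$ and the surviving factors $\{\zeta_1(u_i,\alpha):i\in S\}$ align with the symmetric sums over permutations of $(1,2,3,4)$ appearing in \eqref{quad}.
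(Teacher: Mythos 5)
Your proposal is correct and follows essentially the same route as the paper's proof: the same use of the integral representation \eqref{(2.3)} and $\int_0^1 e^{-\alpha R}\,\dd\alpha=(1-e^{-R})/R$, the same expansion of $(1-e^{-R})/\prod_{i}(e^{\rho_i}-1)$ over proper subsets (which the paper simply writes out as the fifteen explicit terms), and the same Gamma-function substitution $x_i=\alpha\rho_i$ followed by integration over $\alpha\in(1,\infty)$ to identify each piece with the corresponding term of \eqref{quad}. Your subset notation, the explicit cancellation argument for the full-set term, and the remarks on absolute convergence are just a more compact and slightly more careful packaging of the paper's argument.
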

\begin{proof}
Employing the representation \eqref{(2.3)} for each Hurwitz function and integrating over $(0,1)$ we find that the left hand side of \eqref{quad}, which we denote by $Q(u_1,u_2,u_3,u_4)$, is given by
\begin{equation}
Q = \frac{1}{\prod_{i=1}^4 \Gamma(u_i)} \int_{(0,\infty)^4} \frac{\prod_{i=1}^4 \rho_i^{u_i-1}\, \dd \rho_i}{\rho_1+\rho_2+\rho_3+\rho_4} \frac{1- e^{-(\rho_1+\rho_2+\rho_3+\rho_4)}}{\prod_{i=1}^4 R_i} \label{Q}
\end{equation}
where the functions $\{R_i\}$ are defined by
\begin{equation} R_i = e^{\rho_i}-1 , \quad i=1,2,3,4. \end{equation}
The following identity is valid:
\begin{multline} \frac{1 - e^{-(\rho_1+\rho_2+\rho_3+\rho_4)}}{\prod_{i=1}^4 R_i} = e^{-(\rho_1+\rho_2+\rho_3+\rho_4)} \left[ 1 + \frac{1}{R_1} + \frac{1}{R_2} + \frac{1}{R_3} + \frac{1}{R_4} \right. \\
\left. +\frac{1}{R_1R_2} + \frac{1}{R_2R_3} + \frac{1}{R_3R_4} + \frac{1}{R_1 R_3} + \frac{1}{R_2 R_4} +\frac{1}{R_1R_4} + \frac{1}{R_1 R_2 R_3} + \frac{1}{R_1 R_2 R_4} + \frac{1}{R_1 R_3 R_4} + \frac{1}{R_2 R_3R_4}\right].
\end{multline}
Using this in \eqref{Q} we find
\begin{multline}
Q = \frac{1}{\prod_{i=1}^4 \Gamma(u_i)} \int_{(0,\infty)^4} \frac{\prod_{i=1}^4 \rho_i^{u_i-1}\, \dd \rho_i}{\rho_1+\rho_2+\rho_3+\rho_4} e^{-(\rho_1+\rho_2+\rho_3+\rho_4)} \left[ 1 + \frac{1}{R_1} + \frac{1}{R_2} + \frac{1}{R_3} + \frac{1}{R_4} \right. \\
\left. +\frac{1}{R_1R_2} + \frac{1}{R_2R_3} + \frac{1}{R_3R_4} + \frac{1}{R_1 R_3} + \frac{1}{R_2 R_4} +\frac{1}{R_1R_4}+ \frac{1}{R_1 R_2 R_3} + \frac{1}{R_1 R_2 R_4} + \frac{1}{R_1 R_3 R_4} + \frac{1}{R_2 R_3R_4}\right]. \label{mess}
\end{multline}

In order to simplify the right hand side of \eqref{mess} we first note the definition of the Gamma function, namely the equation
\begin{equation} \Gamma(u) = \int_0^\infty r^{u-1} e^{-r}\, \dd r, \quad \Re u >0, \label{Gamma} \end{equation}
implies that
\[ \prod_{i=1}^4 \Gamma(u_i) = \int_{(0,\infty)^4} \prod_{i=1}^4 \left( x_i^{u_i-1}\, \dd x_i\right) e^{-x_1-x_2-x_3-x_4}. \]
Using in the right hand side of this equation the transformations
\begin{equation} x_i = \alpha \rho_i, \quad i=1,2,3,4, \label{trans}\end{equation}
dividing by the product of the four Gamma functions, and multiplying the resulting expression by $\alpha^{-u_1-u_2-u_3-u_4}$, we find the identity
\[ \alpha^{-u_1-u_2-u_3-u_4} = \frac{1}{\prod_{i=1}^4 \Gamma(u_i)} \int_{(0,\infty)^4} \prod_{i=1}^4 \left( \rho_i^{u_i-1} \, \dd \rho_i \right) e^{-\alpha(\rho_1+\rho_2+\rho_3+\rho_4)}. \]
Integrating this equation over $(1,\infty)$ with respect to $\alpha$ we obtain
\begin{equation} \frac{1}{u_1+u_2+u_3+u_4-1} = \frac{1}{\prod_{i=1}^4 \Gamma(u_i)} \int_{(0,\infty)^4} \frac{ \prod_{i=1}^4 \left( \rho_i^{u_i-1}\, \dd \rho_i\right)}{\rho_1+\rho_2+\rho_3+\rho_4} e^{-\rho_1-\rho_2-\rho_3-\rho_4}.  \label{frac}\end{equation}

Employing for $\zeta_1(u,\alpha)$ and $\Gamma(u)$ equations \eqref{(2.3)} and \eqref{Gamma} respectively we find
\[ \zeta_1(u_1,\alpha) \Gamma(u_2) \Gamma(u_3) \Gamma(u_4) = \frac{1}{\Gamma(u_1)} \int_{(0,\infty)^4} \dd\rho_1\, \dd x_2\, \dd x_3\, \dd x_4\, \frac{\rho_1^{u_1-1}}{R_1} x_2^{u_2-1} x_3^{u_3-1} x_4^{u_4-1} e^{-\alpha \rho_1 - (x_2+x_3+x_4)}. \]
Using in the right hand side of this equation the transformations \eqref{trans} but restricted only to $i=2,3,4$, dividing by $\Gamma(u_2) \Gamma(u_3) \Gamma(u_4)$, multiplying by $\alpha^{-(u_2+u_3+u_4)}$ and integrating with respect to $\alpha$ over $(1,\infty)$ we obtain
\begin{equation} \int_1^\infty \alpha^{-(u_2+u_3+u_4)} \zeta_1(u_1,\alpha)\, \dd \alpha = \frac{1}{\prod_{i=1}^4 \Gamma(u_i)} \int_{(0,\infty)^4} \frac{ \prod_{i=1}^4 \left( \rho_i^{u_i-1}\, \dd \rho_i\right) e^{-(\rho_1+\rho_2+\rho_3+\rho_4)}}{R_1 (\rho_1+\rho_2+\rho_3+\rho_4)}. \end{equation}

Similarly equations \eqref{(2.3)} and \eqref{Gamma} imply
\begin{align*} 
&\zeta_1(u_1,\alpha) \zeta_1(u_2,\alpha) \Gamma(u_3) \Gamma(u_4)\\
=& \frac{1}{\Gamma(u_1)\Gamma(u_2)} \int_{(0,\infty)^4} \dd \rho_1\, \dd\rho_2\, \dd x_3\, \dd x_4\, \rho_1^{u_1-1} \rho_2^{u_2-1} \frac{ x_3^{u_3-1} x_4^{u_4-1} e^{-\alpha(\rho_1+\rho_2)-(x_3+x_4)}}{R_1R_2}.
\end{align*} 
Using in the right hand side of this equation the transformations \eqref{trans} but only for $i=3,4$, dividing by $\Gamma(u_3)\Gamma(u_4)$, multiplying by $\alpha^{-u_3-u_4}$ and integrating the resulting expression with respect to $\alpha$ over $(1,\infty)$, we obtain
\begin{equation} \int_1^\infty \alpha^{-u_3-u_4} \zeta_1(u_1,\alpha) \zeta_1(u_2,\alpha) \dd \alpha = \frac{1}{\prod_{i=1}^4 }\int_{(0,\infty)^4} \frac{ \prod_{i=1}^4 \left( \rho_i^{u_i-1} \, \dd \rho_i\right) e^{-\rho_1-\rho_2-\rho_3-\rho_4}}{R_1 R_2 (\rho_1+\rho_2+\rho_3+\rho_4)}. \end{equation}

A similar procedure yields the identity
\begin{equation}
\int_1^\infty \alpha^{-u_4} \zeta_1(u_1,\alpha) \zeta_1(u_2,\alpha) \zeta_1(u_3,\alpha)\,\dd \alpha = \frac{1}{\prod_{i=1}^4 \Gamma(u_i)} \int_{(0,\infty)^4} \frac{ \left( \prod_{i=1}^4 \rho_i^{u_i-1}\, \dd\rho_i\right) e^{-\rho_1-\rho_2-\rho_3-\rho_4}}{R_1R_2R_3(\rho_1+\rho_2+\rho_3+\rho_4)}. \label{endeq}\end{equation}
Employing in equation \eqref{mess} equations \eqref{frac}--\eqref{endeq} and appropriate permutations of $(1,2,3,4)$ we find equation \eqref{quad}.
\end{proof}

\subsection{A new derivation of the results of \cite{katsurada1996explicit}}
Here we rederive some of the results from \cite{katsurada1996explicit}.

The identity in \eqref{I1} is a consequence of equation \eqref{(2.2)} and of the following exact formula.

\begin{lemma}  Let $\zeta_1(u,\alpha), u \in \mathbf{C}$, $\alpha >0$, denote the modified Hurwitz function, and let $\zeta(u), u\in \mathbf{C}$, denote Riemann's zeta function. Then

\begin{equation} \int^{\infty}_{0} \ \alpha^{-v}\zeta_1(u,\alpha)\, \dd \alpha = \frac{\Gamma(1-v)}{\Gamma(u)} \Gamma(u+v-1) \zeta(u+v-1) \label{(2.12)} \end{equation}
and

\begin{equation} \int^1_0  \  \alpha^{-v}\zeta_1(u,\alpha)\,\dd\alpha =\frac{ \zeta(u)-1}{1-v} +\frac{ u}{1-v} \int^1_0  \alpha^{1-v} \zeta_1(u+1,\alpha)\, \dd\alpha, \quad \Re v<2, \quad u\in \mathbf{C}. \label{(2.13)} \end{equation} \end{lemma}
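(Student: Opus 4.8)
The plan is to treat the two identities separately, using in each case one of the integral representations already in play in the proofs of Lemmas \ref{lemma4} and \ref{lemma5}, and to obtain each equality first on the strip where every integral converges absolutely.

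For \eqref{(2.12)} I would start from the representation \eqref{(2.3)} for $\zeta_1(u,\alpha)$, multiply by $\alpha^{-v}$, and integrate in $\alpha$ over $(0,\infty)$. Interchanging the two integrations (legitimate by absolute convergence when $\Re v<1$ and $\Re(u+v)>2$) reduces the inner integral to a Gamma integral, $\int_0^\infty \alpha^{-v} e^{-\alpha\rho}\,\dd\alpha = \Gamma(1-v)\rho^{v-1}$, valid precisely when $\Re v<1$. What remains is
\[ \frac{\Gamma(1-v)}{\Gamma(u)} \int_0^\infty \frac{\rho^{u+v-2}}{e^\rho-1}\,\dd\rho, \]
and the last integral is the classical formula $\int_0^\infty \rho^{s-1}(e^\rho-1)^{-1}\,\dd\rho = \Gamma(s)\zeta(s)$ with $s=u+v-1$, which I would derive on the spot by expanding $(e^\rho-1)^{-1}=\sum_{n\ge1}e^{-n\rho}$ and integrating term by term. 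This yields \eqref{(2.12)} throughout the strip $\Re v<1$, $\Re(u+v)>2$, which is exactly the region in which the left-hand integral converges.

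For \eqref{(2.13)} the natural tool is integration by parts together with the elementary relation $\d_\alpha \zeta_1(u,\alpha) = -u\,\zeta_1(u+1,\alpha)$, obtained by differentiating the defining series \eqref{(2.1)} term by term. Taking $\alpha^{1-v}/(1-v)$ as the antiderivative of $\alpha^{-v}$ produces the boundary contribution $\bigl[(1-v)^{-1}\alpha^{1-v}\zeta_1(u,\alpha)\bigr]_0^1$ together with $\tfrac{u}{1-v}\int_0^1 \alpha^{1-v}\zeta_1(u+1,\alpha)\,\dd\alpha$. The upper endpoint gives $\tfrac{1}{1-v}\zeta_1(u,1)=\tfrac{\zeta(u)-1}{1-v}$, since $\zeta_1(u,1)=\sum_{m\ge2}m^{-u}=\zeta(u)-1$; the lower endpoint vanishes because $\alpha^{1-v}\to0$ as $\alpha\to0^+$ when $\Re v<1$, while $\zeta_1(u,\alpha)$ stays bounded. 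This establishes \eqref{(2.13)} for $\Re v<1$.

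The one point requiring genuine care is the passage from $\Re v<1$ to the stated range $\Re v<2$ in \eqref{(2.13)}. There I would observe that the right-hand side is holomorphic in $v$ on $\{\Re v<2\}\setminus\{1\}$ — the integral $\int_0^1 \alpha^{1-v}\zeta_1(u+1,\alpha)\,\dd\alpha$ converges for $\Re v<2$ — so it furnishes the analytic continuation of the left-hand side, which as literally written converges only for $\Re v<1$. Equivalently, one may split $\zeta_1(u,\alpha)=\zeta(u)+(\zeta_1(u,\alpha)-\zeta(u))$ and use $\zeta_1(u,\alpha)-\zeta(u)=\mathcal{O}(\alpha)$ near $\alpha=0$ to see directly that the regularised integral already converges for $\Re v<2$. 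I expect this continuation step, rather than any single computation, to be the only real obstacle; the Fubini interchange behind \eqref{(2.12)} and the vanishing of the lower boundary term in \eqref{(2.13)} are the two places where the convergence hypotheses must be invoked explicitly.
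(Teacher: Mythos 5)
Your proposal is correct, and for \eqref{(2.13)} it coincides with the paper's proof exactly: integration by parts using $\partial_\alpha\zeta_1(u,\alpha)=-u\,\zeta_1(u+1,\alpha)$, with the boundary term at $\alpha=1$ giving $\zeta_1(u,1)=\zeta(u)-1$. For \eqref{(2.12)} you take a genuinely different (though equally standard) route: the paper stays with the defining series, writes $\int_0^\infty\alpha^{-v}\zeta_1(u,\alpha)\,\dd\alpha=\sum_m\int_0^\infty\alpha^{-v}(m+\alpha)^{-u}\,\dd\alpha$, rescales $\alpha=\beta m$ to pull out $\sum_m m^{1-(u+v)}=\zeta(u+v-1)$, and finishes with the Beta-function identity \eqref{(2.14)}; you instead start from the integral representation \eqref{(2.3)}, apply Fubini, evaluate $\int_0^\infty\alpha^{-v}e^{-\alpha\rho}\,\dd\alpha=\Gamma(1-v)\rho^{v-1}$, and finish with the Mellin transform $\int_0^\infty\rho^{s-1}(e^\rho-1)^{-1}\,\dd\rho=\Gamma(s)\zeta(s)$. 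The two computations are dual to one another — expanding $(e^\rho-1)^{-1}$ as a geometric series in your final integral reproduces the paper's sum-then-integrate order — and they are valid on the same region $\Re v<1$, $\Re(u+v)>2$, so neither buys extra generality; yours has the mild advantage of fitting the machinery already used in Lemmas \ref{lemma4} and \ref{lemma5}, the paper's of needing only the series definition. One point where your write-up is actually more careful than the paper's: you flag explicitly that the literal integral on the left of \eqref{(2.13)} converges only for $\Re v<1$, and that the stated range $\Re v<2$ (which is what is actually needed when the lemma is invoked, with $\Re v>1$, in the derivation of \eqref{I1}) is reached by analytic continuation, the right-hand side being holomorphic there; the paper's proof ends at ``Thus, \eqref{(2.13)} follows'' and leaves this entirely implicit.
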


\begin{proof} In order to derive \eqref{(2.12)} we first assume that $\Re u>1$, so that we can use the sum representation of $J_1(u,\alpha)$.  Furthermore, we assume that $\Re v<1$, so that the relevant integral converges at $\alpha=0$.  Then,
$$\int^{\infty}_{0}  \alpha^{-v} \zeta_1(u,\alpha)\, \dd\alpha = \sum^\infty_{m=1} \int^{\infty}_{0}  \alpha^{-v} (m+\alpha)^{-u}\, \dd\alpha = \sum^\infty_{m=1} m^{1-(u+v)} \int^{\infty}_{0}  \beta^{-v} (1+\beta)^{-u}\, \dd \beta,$$
where we have used the change of variables $\alpha =\beta m$ in the second equation.  Then, the definition of Riemann's zeta function, together with the identity
\begin{equation} \int^{\infty}_{0}  \frac{\beta^{-v}}{(1+\beta)^u}\, \dd\beta = \frac{\Gamma(1-v)\Gamma(u+v-1)}{\Gamma(u)}, \label{(2.14)} \end{equation}
imply equation \eqref{(2.12)}.

In order to derive equation \eqref{(2.13)} we use integration by parts:
$$\int^1_0  \alpha^{-v}\zeta_1(u,\alpha)\, \dd\alpha = \left. \frac{\alpha^{1-v}}{1-v} \zeta_1(u,\alpha)\right|^1_0 - \frac{1}{1-v} \int^1_0  \alpha^{1-v} \frac{\partial}{\partial\alpha} \zeta_1(u,\alpha)\, \dd \alpha.$$
Thus, \eqref{(2.13)} follows.
\end{proof}

\begin{proof}[Proof of identity \eqref{I1}] Splitting the first integral in the RHS of \eqref{(2.2)} and then using equations \eqref{(2.12)} and \eqref{(2.13)}, as well as using the analogous equations where $u$ and $v$ are interchanged, we find equation \eqref{I1}. \end{proof}

\begin{remark} In order to derive equation \eqref{(1.5)} we let $\sigma = \frac{1}{2} + \frac{\varepsilon}{2}$ in the LHS of \eqref{(1.5)}, and employ the identities
\begin{equation} \Gamma(\varepsilon) = \frac{1}{\varepsilon} - \gamma + O(\varepsilon), \quad \varepsilon \rightarrow 0, \label{(2.15)} \end{equation}
\begin{equation}\frac{\Gamma'\left( \frac{1}{2} + it\right)}{ \Gamma\left( \frac{1}{2} + it\right)} = \ln t + \frac{i\pi}{2} + O \left( \frac{1}{t^2}\right), \quad t \rightarrow \infty, \label{(2.16)} \end{equation}
as well as the identity
\begin{equation} \zeta(\varepsilon) = -\frac{1}{2} \left[ 1 + \varepsilon\ln(2\pi) + O(\varepsilon^2)\right], \quad \varepsilon \rightarrow 0. \label{(2.17)} \end{equation}
\end{remark}

\begin{remark}  By proving a simple estimate for the integrals in the RHS of \eqref{I1}, it is shown in \cite{katsurada1996explicit} that these integrals do {\it not} contribute to the leading asymptotics of the LHS of equation \eqref{I1}.  Actually, it is straightforward to show that if
\[ v=\sigma_1-it, \quad u = \sigma_2+it, \quad \sigma_1<2, \quad \sigma_1 >0,  \]
then
\begin{equation} \int^1_0 \alpha^{1-v} \zeta_1(u+1,\alpha)\, \dd\alpha = \frac{1}{it} \sum^\infty_{m=1} \frac{1}{m(m+1)^u} + O\left( \frac{1}{t^2}\right), \quad t\rightarrow\infty. \label{(2.19)} \end{equation}
Indeed,
\begin{equation}\int^1_0 \alpha^{1-v} \zeta_1(u+1,\alpha) = \sum^\infty_{m=1} \int^1_0 \alpha^{1-\sigma_1} (m+\alpha)^{-1-\sigma_2}e^{it[\ln \alpha - \ln (\alpha+m)]}\, \dd\alpha. \label{(2.20)} \end{equation}
Noting that
$$\frac{d}{d\alpha} [\ln \alpha - \ln(\alpha+m)] \neq 0, \quad m \geq 1, \quad 0<\alpha <1,$$
it follows that integrals in the RHS of \eqref{(2.20)} do not possess any stationary points.  Then, straightforward integration by parts yields \eqref{(2.19)}. \end{remark}

\section{A relation between quadratic products
of Hurwitz zeta functions and their Fourier series}
Theorem 2 will be proved by examining the Fourier series for the function
\[ \zeta_1(u,\alpha)\zeta_1(v,\alpha) \]
for $\Re u, \Re v >0$. Following Rane \cite{rane1997approx} we first construct the Fourier series for $\zeta_1(s,\alpha)$.
\begin{lemma}
Let $\sigma\in (0,1)$. Then the Fourier series
\[ \frac{1}{s-1} + \sum_{n\neq 0} a_n(s) e^{2\pi\ii n\alpha}, \qquad a_n(s) = \int_1^\infty \alpha^{-s} e^{-2\pi\ii n \alpha}\, \dd\alpha \]
converges pointwise to $\zeta_1(s,\alpha)$ for each $\alpha\in(0,1)$.
\end{lemma}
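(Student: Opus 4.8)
The plan is to identify the stated coefficients $a_n(s)$ and $1/(s-1)$ as the genuine Fourier coefficients of the map $\alpha \mapsto \zeta_1(s,\alpha)$ on $[0,1)$, and then to invoke a classical pointwise-convergence criterion. Write $c_n(s) = \int_0^1 \zeta_1(s,\alpha) e^{-2\pi\ii n\alpha}\, \dd\alpha$ for the $n$-th Fourier coefficient. I would first compute these in the region of absolute convergence $\Re s>1$, where the defining series for $\zeta_1$ converges uniformly on $[0,1]$ and may be integrated term by term. After the change of variables $\beta = m+\alpha$ on each term and using $e^{2\pi\ii nm}=1$, the sum of integrals telescopes into $\int_1^\infty \beta^{-s} e^{-2\pi\ii n\beta}\, \dd\beta$. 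For $n\neq 0$ this is precisely $a_n(s)$, while for $n=0$ it equals $\int_1^\infty \beta^{-s}\, \dd\beta = 1/(s-1)$.

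Next I would propagate these identities from $\Re s>1$ into the strip $0<\sigma<1$ by analytic continuation in $s$. For fixed $\alpha\in(0,1)$ the map $s\mapsto \zeta_1(s,\alpha)$ is holomorphic for $s\neq 1$, and since the $\alpha$-integral runs over a compact set, $c_n(s)$ is holomorphic in $s$ throughout the strip. On the other hand, for $n\neq 0$ integration by parts gives $a_n(s) = \tfrac{1}{2\pi\ii n} - \tfrac{s}{2\pi\ii n}\int_1^\infty \alpha^{-s-1}e^{-2\pi\ii n\alpha}\,\dd\alpha$, which exhibits $a_n(s)$ as holomorphic for $\sigma>0$ (the boundary term at infinity vanishes and the remaining integral converges absolutely), while $1/(s-1)$ is holomorphic away from $s=1$. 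Since $c_n(s)=a_n(s)$ (respectively $c_0(s)=1/(s-1)$) holds on the half-plane $\Re s>1$, it persists on the whole strip by the identity theorem, noting that $s=1$ is excluded because $\sigma<1$.

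Finally I would establish pointwise convergence. The key structural observation is that, for fixed $s$ with $\sigma\in(0,1)$, the function $\alpha\mapsto \zeta_1(s,\alpha)$ coincides with $\zeta(s,1+\alpha)$ and hence extends holomorphically to a neighbourhood of $[0,1]$ (the summation index satisfies $m+\alpha\geq 1$, so no term is singular); in particular it is $C^1$, hence of bounded variation, on $[0,1]$. Its $1$-periodic extension therefore satisfies the hypotheses of the Dirichlet--Jordan theorem, so the symmetric partial sums of its Fourier series converge at every point to the average of the one-sided limits. At each interior point $\alpha\in(0,1)$ the function is continuous, so this average equals $\zeta_1(s,\alpha)$; combined with the coefficient identities this yields the claim. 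The periodic extension is genuinely discontinuous at the integers, where $\zeta_1(s,0^+)=\zeta(s)$ jumps to $\zeta_1(s,1^-)=\zeta(s)-1$, which is exactly why the statement is restricted to the open interval.

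I expect the main obstacle to be this pointwise-convergence step rather than the coefficient computation: one must confirm that $\zeta_1(s,\alpha)$ really is of bounded variation on the closed interval and then invoke the correct convergence theorem, since the estimate $a_n(s)=\tfrac{1}{2\pi\ii n}+\mathcal{O}(n^{-2})$ shows the series is only conditionally convergent and does not yield absolute convergence. Using the Dirichlet--Jordan criterion sidesteps the need to analyse the conditionally convergent tail by hand; alternatively one could isolate the explicit $\tfrac{1}{2\pi\ii n}$ part, recognise it as the Fourier series of a sawtooth function, and treat the remainder by absolute convergence, but this route is more laborious.
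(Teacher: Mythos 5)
Your proof is correct, but it takes a genuinely different route from the paper's. The paper identifies the coefficients by quoting Hurwitz's classical Fourier expansion of $\zeta(s,\alpha)=\alpha^{-s}+\zeta_1(s,\alpha)$, whose $n$-th coefficient is $\Gamma(1-s)(2\pi\ii n)^{s-1}$, subtracting the coefficient $\int_0^1\alpha^{-s}e^{-2\pi\ii n\alpha}\,\dd\alpha$ of $\alpha^{-s}$, and then using Euler's integral representation of the Gamma function (rotated to the imaginary direction, valid for $0<\sigma<1$) to recognise the difference as $\int_1^\infty\alpha^{-s}e^{-2\pi\ii n\alpha}\,\dd\alpha$; pointwise convergence is dispatched in one line from the smoothness of $\alpha\mapsto\zeta_1(s,\alpha)$, i.e.\ by localization. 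You instead compute the coefficients elementarily in the half-plane $\Re s>1$ (term-by-term integration and concatenation of the intervals $[m,m+1]$ --- ``telescoping'' is a slight misnomer, but the computation is right), and then transport the identities $c_n=a_n$ and $c_0=1/(s-1)$ into the strip by the identity theorem, using the integration-by-parts representation of $a_n(s)$ to exhibit its holomorphy for $\sigma>0$; this is the same representation the paper itself uses later to continue $a_n$ into $\Re s>-1$, so nothing exotic is needed. For convergence you invoke the Dirichlet--Jordan criterion via bounded variation rather than smoothness; either suffices, and your observation that the periodic extension jumps from $\zeta(s)-1$ to $\zeta(s)$ at the integers correctly explains why the statement is restricted to the open interval. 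Your route is more self-contained --- it avoids Hurwitz's formula and the contour-rotation identity $\Gamma(1-s)(2\pi\ii n)^{s-1}=\int_0^\infty\alpha^{-s}e^{-2\pi\ii n\alpha}\,\dd\alpha$ --- at the cost of an extra analytic-continuation step; the paper's argument is shorter but leans on a substantive classical input.
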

\begin{proof}
Since $\zeta_1(s,\alpha)$ is a smooth function of $\alpha$ (for fixed $s$) its Fourier series converges pointwise for $\alpha\in (0,1)$. The Fourier coefficients are defined by
\[ a_n(s) = \int_0^1 e^{-2\pi\ii n \alpha}\zeta_1(s,\alpha)\, \dd \alpha. \]
Note that the Fourier series for $\zeta(\alpha,s) =\alpha^{-s} + \zeta_1(s,\alpha)$ is well known, and has Fourier coefficients $\Gamma(1-s) (2\pi\ii n)^{s-1}$. Hence
\[ a_n(s) = \Gamma(1-s) (2\pi\ii n)^{s-1} - \int_0^1 \alpha^{-s} e^{-2\pi\ii n\alpha}\, \dd \alpha. \]
Using Euler's integral representation of the Gamma function we arrive at the desired result.
\end{proof}
\begin{remark}
Since $a_n(s)$ is expressible in terms of the incomplete Gamma function, we conclude that it has an analytic extension to all complex $s\neq 1$.
\end{remark}
Note that for $\sigma>1$ we have
\[ \frac{1}{s-1} = \int_1^\infty \alpha^{-s}\, \dd \alpha, \]
so we may write
\[ a_0(s) = \int_1^\infty \alpha^{-s}\, \dd \alpha, \]
for $\sigma>1$, and by analytic continuation elsewhere. Now we write
\[ \zeta_1(s,\alpha) = \sum_n a_n(s) e^{2\pi\ii n \alpha}, \qquad \sigma>0 \]
where the $a_n(s)$ are defined accordingly.
\begin{lemma}
Let $\Re u, \Re v>1$ and define the functions
\[ q_n(u,v) =  a_n(u+v)  + \int_1^\infty \zeta_1(u,\alpha) \alpha^{-v}e^{-2\pi\ii n \alpha}\, \dd \alpha+ \int_1^\infty \zeta_1(v,\alpha) \alpha^{-u}e^{-2\pi\ii n \alpha}\, \dd \alpha  \quad n \in \mathbf{Z}. \]
Then the Fourier series $\sum_n q_n(u,v) e^{2\pi \ii n \alpha}$ converges pointwise to $\zeta_1(u,\alpha)\zeta_1(v,\alpha)$ for $\alpha \in (0,1)$.
\end{lemma}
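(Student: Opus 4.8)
The plan is to read this statement as the pointwise-convergence counterpart of the quadratic formula of Lemma~\ref{lemma4}: the quantity $q_n(u,v)$ is precisely the $n$-th Fourier coefficient of the product $\zeta_1(u,\alpha)\zeta_1(v,\alpha)$ on the interval $(0,1)$, and the case $n=0$ reduces exactly to \eqref{(2.2)} once one notes that $a_0(u+v)=1/(u+v-1)$. Accordingly I would split the argument into two parts. First, since for fixed $u,v$ with $\Re u,\Re v>1$ the product $\zeta_1(u,\alpha)\zeta_1(v,\alpha)$ is a smooth function of $\alpha$ on $(0,1)$, its Fourier series converges pointwise there (exactly as invoked in the preceding lemma for $\zeta_1(s,\alpha)$ itself). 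Hence it suffices to prove the purely computational claim that
\[ c_n \defn \int_0^1 \zeta_1(u,\alpha)\zeta_1(v,\alpha)\, e^{-2\pi\ii n\alpha}\,\dd\alpha = q_n(u,v), \qquad n\in\mathbf{Z}. \]

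Second, I would evaluate $c_n$ by repeating the proof of Lemma~\ref{lemma4} verbatim, inserting the extra factor $e^{-2\pi\ii n\alpha}$ throughout. Using the integral representation \eqref{(2.3)} for each Hurwitz factor and integrating over $\alpha\in(0,1)$, the only change relative to the untwisted computation is that the elementary $\alpha$-integral becomes
\[ \int_0^1 e^{-\alpha(\rho_1+\rho_2)}e^{-2\pi\ii n\alpha}\,\dd\alpha = \frac{1-e^{-(\rho_1+\rho_2)}}{\rho_1+\rho_2+2\pi\ii n}, \]
where the numerator is unchanged because $e^{-2\pi\ii n}=1$, while the denominator $\rho_1+\rho_2$ is merely shifted to $\rho_1+\rho_2+2\pi\ii n$. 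Applying the same algebraic identity for $(1-e^{-(\rho_1+\rho_2)})/((e^{\rho_1}-1)(e^{\rho_2}-1))$ used in Lemma~\ref{lemma4} then decomposes $c_n$ into three double integrals $J_n+I_n(v,u)+I_n(u,v)$, identical to those in that proof except for the shifted denominator.

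The remaining task is to identify each piece. For $J_n$ I would rerun the scaling argument of Lemma~\ref{lemma4}: starting from $\Gamma(u)\Gamma(v)=\int_{(0,\infty)^2}\rho_1^{u-1}\rho_2^{v-1}e^{-(\rho_1+\rho_2)}$, substitute $\rho_i=\alpha x_i$, multiply by $\alpha^{-(u+v)}e^{-2\pi\ii n\alpha}$ and integrate $\alpha$ over $(1,\infty)$; the inner integral $\int_1^\infty e^{-\alpha(x_1+x_2+2\pi\ii n)}\,\dd\alpha=e^{-(x_1+x_2)}/(x_1+x_2+2\pi\ii n)$ reproduces exactly the shifted kernel of $J_n$, giving $J_n=\int_1^\infty\alpha^{-(u+v)}e^{-2\pi\ii n\alpha}\,\dd\alpha=a_n(u+v)$. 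The same scaling applied to the product $\Gamma(v)\zeta_1(u,\alpha)$ (with $\rho=\alpha\rho_1$) yields $I_n(v,u)=\int_1^\infty\alpha^{-v}\zeta_1(u,\alpha)e^{-2\pi\ii n\alpha}\,\dd\alpha$, and by symmetry $I_n(u,v)=\int_1^\infty\alpha^{-u}\zeta_1(v,\alpha)e^{-2\pi\ii n\alpha}\,\dd\alpha$. Summing gives $c_n=q_n(u,v)$, completing the computation.

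I expect the main obstacle to be purely a matter of justification rather than of new ideas: one must verify that all the Fubini interchanges and scaling substitutions are legitimate. The hypotheses $\Re u,\Re v>1$ guarantee absolute convergence of the double integrals at $\rho_i=0$ while the exponential factors control $\rho_i\to\infty$, exactly as in Lemma~\ref{lemma4}; the complex denominators $\rho_1+\rho_2+2\pi\ii n$ can vanish only at the origin with $n=0$, where integrability is already secured by the factor $\rho_1^{u-1}\rho_2^{v-1}$, so the twist introduces no new singularity and the estimates remain uniform. The only genuinely analytic input beyond Lemma~\ref{lemma4} is the pointwise convergence of the Fourier series, which follows at once from the smoothness of the product in $\alpha$ on $(0,1)$.
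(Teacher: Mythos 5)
Your proposal is correct, but it follows a genuinely different route from the paper. The paper's proof never touches the integral representation \eqref{(2.3)}: it starts from the convolution theorem for Fourier coefficients, writing $q_n(u,v)=\sum_m a_m(u)a_{n-m}(v)$, expands this as an absolutely convergent double integral over $(1,\infty)^2$, and then invokes the distributional identity $\sum_m e^{-2\pi\ii m(\alpha-\beta)}=\sum_m \delta(\beta-\alpha-m)$ to collapse the $\beta$-integral, after which domination by $\alpha^{-\Re u}$, $\alpha^{-\Re v}$ lets the sum over $m$ pass inside the integrals to produce the three stated terms. You instead rerun the proof of Lemma \ref{lemma4} with the twist $e^{-2\pi\ii n\alpha}$ inserted: the same algebraic splitting of $(1-e^{-(\rho_1+\rho_2)})/((e^{\rho_1}-1)(e^{\rho_2}-1))$ applies because the twist only shifts the denominator $\rho_1+\rho_2$ to $\rho_1+\rho_2+2\pi\ii n$ (using $e^{-2\pi\ii n}=1$), and the scaling substitutions $\rho_i=\alpha x_i$ identify the three pieces as $a_n(u+v)$ and the two twisted integrals against $\zeta_1$. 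Your computations check out: the key identity $\int_0^1 e^{-\alpha(\rho_1+\rho_2+2\pi\ii n)}\,\dd\alpha=(1-e^{-(\rho_1+\rho_2)})/(\rho_1+\rho_2+2\pi\ii n)$ and the evaluation $\int_1^\infty e^{-\alpha(x_1+x_2+2\pi\ii n)}\,\dd\alpha = e^{-(x_1+x_2)}/(x_1+x_2+2\pi\ii n)$ are exactly right, and for $n\neq 0$ the denominator never vanishes while for $n=0$ you recover Lemma \ref{lemma4} verbatim. What your route buys is that every step is an absolutely convergent integral plus Fubini, avoiding the paper's formal delta-function step (which strictly speaking needs its own justification, e.g.\ via Poisson summation or absolute convergence of the coefficient convolution $a_m(u)=\mathcal{O}(1/|m|)$); it also makes transparent that the lemma is just the ``twisted'' quadratic formula, with \eqref{(2.2)} as the $n=0$ case. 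What the paper's route buys is brevity given the preceding lemma on $a_n(s)$, and it exhibits the convolution structure of $q_n$ explicitly, which explains a priori why the answer must take this form. Both arguments rest on the same final ingredient, namely that the product is smooth in $\alpha$ on $[0,1]$ so its Fourier series converges pointwise on the open interval.
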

\begin{proof}
Since the Fourier coefficients for $\zeta_1(u,\alpha)$ are $a_n(u)$ the Fourier coefficients of the product $\zeta_1(u,\alpha) \zeta_1(v,\alpha)$ are given by the convolution
\[ q_n(u,v) = \sum_m a_m(u) a_{n-m}(v). \]
For $\Re u, \Re v >1$ we have
\[ \sum_m a_m(u) a_{n-m}(v) = \sum_m \int_1^\infty \dd \alpha \int_1^\infty \dd \beta\, \alpha^{-u} \beta^{-v} e^{-2\pi\ii m(\alpha-\beta)} e^{-2\pi\ii n \beta},
 \]
the double integral being absolutely convergent. Now recall the distributional result
\[ \sum_m e^{-2\pi\ii m(\alpha-\beta)} = \sum_m \delta(\beta-\alpha - m). \]
Using this in the above we find
\begin{multline} q_n(u,v) =  \sum_{m\geq 1} \int_1^\infty \alpha^{-u} (m+\alpha)^{-v}e^{-2\pi\ii n \alpha}\, \dd \alpha  \\
+ \int_1^\infty \alpha^{-u-v} e^{-2\pi\ii n \alpha}\, \dd \alpha + \sum_{m\geq 1} \int_1^\infty \alpha^{-v} (m+\alpha)^{-u}e^{-2\pi\ii n \alpha}\, \dd \alpha. \label{convhalf} \end{multline}
Since $\Re u, \Re v>1$ the integrands of the first and third terms can be dominated by the integrable functions $\alpha^{-\Re u}$ and $\alpha^{-\Re v}$ respectively, allowing us to pass the sum inside the integral,
\[q_n(u,v) =  a_n(u+v)  + \int_1^\infty \zeta_1(u,\alpha) \alpha^{-v}e^{-2\pi\ii n \alpha}\, \dd \alpha+ \int_1^\infty \zeta_1(v,\alpha) \alpha^{-u}e^{-2\pi\ii n \alpha}\, \dd \alpha. \]
We note that both the integrals are absolutely convergent for $\Re u, \Re v>1$.
\end{proof}
To establish the main result in this section we must first perform an analytic continuation of the functions $q_n(u,v)$ valid for $\Re u, \Re v>0$. To this end, we recall the following result \cite{rane1997approx}:
\begin{equation} \zeta_1(s,\alpha) = \frac{\alpha^{1-s}}{s-1} - \frac{\alpha^{-s}}{2} + \lim_{N\rightarrow \infty}\sum_{0<|m|<N} \left( \int_\alpha^\infty x^{-s} e^{2\pi\ii m x}\, \dd x\right) e^{-2\pi\ii m \alpha}, \label{Raneeq} \end{equation}
where $s=\sigma+\ii t$ and $\sigma>0$. This result can be derived using the Euler-Maclaurin formula. We will need the following lemma to control the final term.
\begin{lemma}\label{intbyparts}
Let $s=\sigma+\ii t$ with $\sigma>0$. Then if $\alpha\geq \eta > t/2\pi$ we have
\[ \left|\lim_{N\rightarrow \infty}\sum_{0<|m|<N}\left( \int_\alpha^\infty x^{-s} e^{2\pi\ii m x}\, \dd x\right) e^{-2\pi\ii m \alpha}\right| \lesssim_\eta t \alpha^{-\sigma-1}. \]
\end{lemma}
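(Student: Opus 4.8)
The plan is to reduce the conditionally convergent doubly-infinite sum to an absolutely convergent series by integrating by parts \emph{twice} in the variable $x$, and only then to read off the estimate. Write $A_m = \int_\alpha^\infty x^{-s} e^{2\pi\ii m x}\,\dd x$ for $m\neq 0$; since $\sigma>0$ each $A_m$ converges as an oscillatory (improper) integral by Dirichlet's test, and the quantity to be bounded is $T(\alpha)=\lim_{N\to\infty}\sum_{0<|m|<N} A_m e^{-2\pi\ii m\alpha}$. First I would integrate by parts once, differentiating $x^{-s}$ and integrating the exponential. The boundary contribution at $x=\infty$ vanishes because $x^{-\sigma}\to 0$, while the contribution at $x=\alpha$ is $-\alpha^{-s}e^{2\pi\ii m\alpha}/(2\pi\ii m)$; after multiplication by $e^{-2\pi\ii m\alpha}$ this is $-\alpha^{-s}/(2\pi\ii m)$, which is odd in $m$ and so cancels in every symmetric partial sum. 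This leaves $T(\alpha)=\frac{s}{2\pi\ii}\lim_{N\to\infty}\sum_{0<|m|<N}\frac1m H_m$, where $H_m=\int_\alpha^\infty x^{-s-1}e^{2\pi\ii m(x-\alpha)}\,\dd x$.

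At this stage the trivial bound $|H_m|\lesssim\int_\alpha^\infty x^{-\sigma-1}\,\dd x$ is uniform in $m$, so term by term the series decays only like $1/m$ and its absolute convergence is not yet manifest. The decisive step is therefore a \emph{second} integration by parts on each $H_m$, which gives $H_m = -\frac{\alpha^{-s-1}}{2\pi\ii m}+\frac{s+1}{2\pi\ii m}\int_\alpha^\infty x^{-s-2}e^{2\pi\ii m(x-\alpha)}\,\dd x$, exposing the extra factor $1/m$. Substituting this into the finite partial sums and rearranging (legitimate for each fixed $N$) yields, as $N\to\infty$, two absolutely convergent series: a boundary series proportional to $\alpha^{-s-1}\sum_{m\neq 0}m^{-2}$, and a series $\sum_{m\neq 0}m^{-2}\int_\alpha^\infty x^{-s-2}e^{2\pi\ii m(x-\alpha)}\,\dd x$ dominated by $\big(\sum_{m\neq 0}m^{-2}\big)\int_\alpha^\infty x^{-\sigma-2}\,\dd x\lesssim\alpha^{-\sigma-1}$. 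Hence $|T(\alpha)|\lesssim |s|\,\alpha^{-\sigma-1}+|s|\,|s+1|\,\alpha^{-\sigma-1}\lesssim t\,\alpha^{-\sigma-1}+t^2\,\alpha^{-\sigma-1}$.

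The final—and really the only—place the hypothesis enters is in absorbing the quadratic factor: since $\eta>t/2\pi$, that is $t<2\pi\eta$, we have $t^2\leq 2\pi\eta\,t$, so the $t^2$ term is $\lesssim_\eta t\,\alpha^{-\sigma-1}$ and is swallowed by the linear one, giving the stated bound. Equivalently, one may resum the first integration by parts into the periodic sawtooth $\overline{B}_1(x-\alpha)=\{x-\alpha\}-\tfrac12$ to obtain the compact form $T(\alpha)=-s\int_\alpha^\infty x^{-s-1}\overline{B}_1(x-\alpha)\,\dd x$; here the non-vanishing of the phase derivative $2\pi m - t/x$ on $[\alpha,\infty)$—guaranteed by $t/x\leq t/\alpha\leq t/\eta<2\pi$, which is exactly where both $\alpha\geq\eta$ and $\eta>t/2\pi$ are used—is the statement that there is no stationary point to spoil the non-stationary-phase estimate.

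I expect the main obstacle to be one of rigor rather than of ideas. The two technical points requiring care are the vanishing of the boundary term at infinity (handled by the conditional convergence of $A_m$ for $\sigma>0$) and, more importantly, the transition from the conditionally convergent single sum $\sum_m m^{-1}H_m$ to an absolutely convergent double series: it is this that forces the second integration by parts, since no direct termwise estimate is available before the extra $1/m$ decay has been extracted. Once the series are absolutely convergent, the interchange of summation and integration and the passage to the limit $N\to\infty$ are routine, and the bound follows by the crude estimates above together with the single inequality $t<2\pi\eta$.
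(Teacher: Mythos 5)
Your two integrations by parts are computed correctly, and your handling of the conditional convergence is careful and sound: the boundary terms $-\alpha^{-s}/(2\pi\ii m)$ from the first integration by parts are odd in $m$ and cancel exactly in symmetric partial sums, and after the second integration by parts the remaining series are absolutely convergent, so the rearrangements are legitimate. The problem is the final step. Because you integrate by parts against $e^{2\pi\ii m x}$ alone, each differentiation of the amplitude $x^{-s}$ costs a factor $|s|\asymp t$, so what your argument actually establishes is $|T(\alpha)|\lesssim t^{2}\alpha^{-\sigma-1}$ --- and it establishes this for \emph{every} $\alpha>0$: the hypothesis $\alpha\geq\eta>t/2\pi$ never enters your analysis. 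That hypothesis is consumed only at the very end, via $t<2\pi\eta \Rightarrow t^{2}\leq 2\pi\eta\, t$, which converts $t^{2}$ into $t$ at the price of an implied constant proportional to $\eta$. In effect you have read the lemma as a statement about $t$ bounded by $2\pi\eta$, under which reading the factor $t$ in the conclusion carries no information (any power $t^{a}$, $a\geq 1$, could be ``absorbed'' the same way). This reading is incompatible with how the lemma is used: the estimate \eqref{est} derived from it is applied on the range $\alpha\geq t/2\pi+\eta$ with $t\rightarrow\infty$, i.e.\ with the lemma's $\eta$ of size $t/2\pi$, and the subsequent lemma explicitly demands that the implied constant be independent of $t$. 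With a constant growing like $\eta$ your bound reverts to $t^{2}\alpha^{-\sigma-1}$ there, a loss of a full factor of $t$ relative to what is claimed and needed.

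The missing idea is the choice of oscillation. The paper writes $x^{-s}e^{2\pi\ii mx}=\frac{x^{-\sigma}}{\ii(2\pi m-t/x)}\frac{\dd}{\dd x}\left(x^{-\ii t}e^{2\pi\ii mx}\right)$ and integrates by parts against the \emph{full} phase $2\pi mx-t\log x$. The hypothesis $\alpha\geq\eta>t/2\pi$ is precisely what keeps the phase derivative $2\pi m-t/x$ away from zero on the whole contour, so differentiating the amplitude $x^{-\sigma}/(2\pi m-t/x)$ produces no factors of $t$, and (after a second integration by parts and the second mean value theorem) the integral terms sum to only $\mathcal{O}_\eta(\alpha^{-\sigma-1})$. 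The single factor of $t$ then arises from the boundary terms, which in this decomposition do \emph{not} cancel: pairing $m$ with $-m$ gives $\alpha^{-\sigma}\sum_{m\geq 1}\frac{2t/\alpha}{4\pi^{2}m^{2}-t^{2}/\alpha^{2}}\lesssim_\eta t\,\alpha^{-\sigma-1}$. So the structure is the opposite of yours: boundary terms carry the linear $t$, amplitude derivatives are $t$-free. Your closing remark about the non-vanishing of $2\pi m-t/x$ gestures at exactly this mechanism, but your proof never uses it; as written, the approach cannot produce the genuine linear-in-$t$ bound that is the point of the lemma.
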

\begin{proof}
For $\alpha$ in the stated range we can integrate by parts using
\begin{align*} \int_\alpha^\infty x^{-s} e^{2\pi\ii m x}\,\dd x &=  \int_{\alpha}^\infty \frac{x^{-\sigma}}{ \ii( 2\pi m -  t/x)} \frac{\dd}{\dd x} \left( x^{-\ii t} e^{2\pi\ii m x} \right)\, \dd x \\
&= - \frac{\alpha^{-s} e^{2\pi\ii m \alpha}}{\ii(2\pi m - t/x)} +\ii \int_\alpha^\infty \frac{\dd}{\dd x} \left( \frac{x^{-\sigma}}{2\pi m-t/x}\right) x^{-\ii t} e^{2\pi\ii m x}\, \dd x.
\end{align*}
We can estimate the sum arising from the first term
\[ \left| \sum_{0<|m|<N} \frac{\alpha^{-s}}{\ii(2\pi m - t/\alpha)}\right| = \alpha^{-\sigma} \sum_{0<m<N} \frac{2t/\alpha}{(4\pi^2 m^2 - t^2/\alpha^2)} \lesssim_\eta t \alpha^{-\sigma-1}. \]
Computing the derivative and applying integration by parts again, the second term becomes
\[  \int_{\alpha}^\infty \frac{ x^{-\sigma-1}( 2\pi\sigma m + (1-\sigma)t/x))}{\ii(2\pi m - t/x)^3} \frac{\dd}{\dd x}\left( x^{-\ii t} e^{2\pi\ii m x}\right) \dd x. \]
An application of the second mean value theorem for integrals on the real and imaginary parts of this term show it to be $\mathcal{O}_\eta(\alpha^{-\sigma-1} m^{-2}) + \mathcal{O}_\eta(t \alpha^{-\sigma-2} m^{-3} )$. In particular
\[ \left| \sum_{0<|m|<N}\int_\alpha^\infty \frac{\dd}{\dd x} \left( \frac{x^{-\sigma}}{2\pi m-t/x}\right) x^{-\ii t} e^{2\pi\ii m x}\, \dd x \right| \lesssim_\eta  \alpha^{-\sigma-1} \quad \textrm{for $\alpha\geq \eta > t/2\pi$}\]
so we have established our estimate.
\end{proof}
Now we return to the analytic continuation of $q_n(u,v)$ for $\Re u, \Re v>0$. The previous lemma establishes that
\begin{equation} \left|\alpha^{-v} \zeta_1(u,\alpha) - \frac{\alpha^{1-u-v}}{u-1} + \frac{\alpha^{-u-v}}{2}\right| \leq_\eta t \alpha^{-\Re u - \Re v -1} , \quad \textrm{for $\alpha\geq \eta> t/2\pi$}. \label{est} \end{equation}
In particular, the left hand side is an absolutely integrable function of $\alpha$ on $(1,\infty)$ provided that $\Re u, \Re v>0$. This suggests the splitting
\begin{multline*} \int_1^\infty \zeta_1(u,\alpha) \alpha^{-v}e^{-2\pi\ii n \alpha}\, \dd \alpha \equiv \int_1^\infty \left(\alpha^{-v} \zeta_1(u,\alpha) - \frac{\alpha^{1-u-v}}{u-1} + \frac{\alpha^{-u-v}}{2} \right) e^{-2\pi\ii n \alpha}\, \dd \alpha \\
 +  \frac{a_n (u+v-1)}{u-1} - \frac{a_n(u+v)}{2},\end{multline*}
which is valid for $\Re u, \Re v>1$. This gives rise to the representation
\begin{multline*} q_n(u,v) =  \left[ \frac{1}{u-1} + \frac{1}{v-1} \right]a_n(u+v-1)  + \int_1^\infty \left(\alpha^{-v} \zeta_1(u,\alpha) - \frac{\alpha^{1-u-v}}{u-1} + \frac{\alpha^{-u-v}}{2} \right)e^{-2\pi\ii n \alpha}\, \dd \alpha \\+ \int_1^\infty \left(\alpha^{-u} \zeta_1(v,\alpha) - \frac{\alpha^{1-u-v}}{v-1} + \frac{\alpha^{-u-v}}{2} \right)e^{-2\pi\ii n \alpha}\, \dd \alpha, \end{multline*}
which provides an analytic continuation of $q_n(u,v)$ for $\Re u, \Re v>0$.
\begin{remark}
Using $\partial_\alpha \zeta_1(s,\alpha) = -s \zeta_1(\alpha,s+1)$ in \eqref{Raneeq} we see that
\[ \frac{\partial \zeta_1}{\partial \alpha} (\alpha,u) = - \alpha^{-s} + \frac{s \alpha^{-s-1}}{2} - \lim_{N\rightarrow \infty}\sum_{0<|m|<N} \left( s\int_\alpha^\infty x^{-s-1} e^{2\pi\ii m x}\, \dd x\right) e^{-2\pi\ii m \alpha}. \]
Using the previous lemma, this then implies that for $\alpha\geq \eta > t/2\pi$
\begin{equation} \left| \frac{\partial}{\partial \alpha} \left( \zeta_1(s,\alpha) - \frac{\alpha^{1-s}}{s-1} + \frac{\alpha^{-s}}{2} \right) \right| \lesssim_\eta t^2 \alpha^{-\sigma - 2}. \label{derivestimate} \end{equation}
\end{remark}
\begin{lemma}
If $u=\sigma+\ii t$ and $v=\sigma-\ii t$ then for each $\eta>0$
\[ \left|\int_{t/2\pi+\eta}^\infty \left(\alpha^{-v} \zeta_1(u,\alpha) - \frac{\alpha^{1-u-v}}{u-1} + \frac{\alpha^{-u-v}}{2} \right)e^{-2\pi\ii n \alpha}\, \dd \alpha\right| \lesssim_\eta \frac{t^{1-2\sigma}}{n}  \]
where the implied constant is independent of $t$.
\end{lemma}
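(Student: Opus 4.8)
The plan is to exploit a factorisation of the integrand together with the oscillation of $e^{-2\pi\ii n\alpha}$. Since here $u+v=2\sigma$, setting
\[ h(\alpha) = \zeta_1(u,\alpha) - \frac{\alpha^{1-u}}{u-1} + \frac{\alpha^{-u}}{2} \]
(the tail appearing in \eqref{Raneeq} with $s=u$), a one-line check gives the key identity
\[ \alpha^{-v}\zeta_1(u,\alpha) - \frac{\alpha^{1-u-v}}{u-1} + \frac{\alpha^{-u-v}}{2} = \alpha^{-v} h(\alpha), \]
so that with $A = t/2\pi+\eta$ the quantity to be bounded is simply $\int_A^\infty \alpha^{-v} h(\alpha)\, e^{-2\pi\ii n\alpha}\, \dd\alpha$. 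The estimate \eqref{est} reads exactly $|\alpha^{-v}h(\alpha)|\lesssim_\eta t\alpha^{-2\sigma-1}$ on this range, and Lemma \ref{intbyparts} gives the companion bound $|h(\alpha)|\lesssim_\eta t\alpha^{-\sigma-1}$. I note at once that inserting the modulus bound \eqref{est} and integrating produces the right size $t^{1-2\sigma}$ but \emph{loses} the factor $1/n$; recovering that factor is the whole point of the lemma and must come from the oscillatory exponential.

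To extract the $1/n$ I would integrate by parts once, writing $e^{-2\pi\ii n\alpha} = -\tfrac{1}{2\pi\ii n}\tfrac{\dd}{\dd\alpha}e^{-2\pi\ii n\alpha}$, which gives
\[ \int_A^\infty \alpha^{-v} h(\alpha)\, e^{-2\pi\ii n\alpha}\, \dd\alpha = \frac{A^{-v}h(A)\, e^{-2\pi\ii n A}}{2\pi\ii n} + \frac{1}{2\pi\ii n}\int_A^\infty \frac{\dd}{\dd\alpha}\left(\alpha^{-v}h(\alpha)\right) e^{-2\pi\ii n\alpha}\, \dd\alpha. \]
The boundary contribution at $\alpha=\infty$ vanishes because $|\alpha^{-v}h(\alpha)|\lesssim_\eta t\alpha^{-2\sigma-1}\to 0$, and the remaining boundary term at $A$ is bounded using \eqref{est} by $|A^{-v}h(A)|/(2\pi|n|)\lesssim_\eta tA^{-2\sigma-1}/|n| \asymp t^{-2\sigma}/|n|$, which is already smaller than the claimed bound.

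It then remains to control the last integral. Differentiating, $\frac{\dd}{\dd\alpha}\!\left(\alpha^{-v}h(\alpha)\right) = -v\,\alpha^{-v-1}h(\alpha) + \alpha^{-v}h'(\alpha)$. For the first piece I would use $|v|\asymp t$ together with $|h(\alpha)|\lesssim_\eta t\alpha^{-\sigma-1}$ from Lemma \ref{intbyparts}, and for the second the derivative estimate \eqref{derivestimate}, namely $|h'(\alpha)|\lesssim_\eta t^2\alpha^{-\sigma-2}$; both pieces are then $\lesssim_\eta t^2\alpha^{-2\sigma-2}$, so that $\bigl|\frac{\dd}{\dd\alpha}(\alpha^{-v}h(\alpha))\bigr|\lesssim_\eta t^2\alpha^{-2\sigma-2}$. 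Since the exponential has modulus one, bounding the integral by the absolutely convergent $\int_A^\infty t^2\alpha^{-2\sigma-2}\,\dd\alpha \asymp t^2 A^{-2\sigma-1}\asymp t^2 t^{-2\sigma-1}=t^{1-2\sigma}$ and dividing by $2\pi|n|$ yields $t^{1-2\sigma}/|n|$. Combined with the smaller boundary term this gives the asserted estimate (for $n>0$, exactly $t^{1-2\sigma}/n$).

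The only place one must be careful — and the main point rather than a conceptual obstacle — is to verify that integration by parts does not cost a power of $t$. Differentiating $\alpha^{-v}h(\alpha)$ brings down a factor $|v|\asymp t$, and correspondingly \eqref{derivestimate} exceeds the bound on $h$ by a factor $\asymp t/\alpha$; but differentiation simultaneously improves the decay in $\alpha$ by one power. On the range $\alpha\geq t/2\pi+\eta$ we have $\alpha\asymp t$, so these two effects cancel and both $\int_A^\infty|\alpha^{-v}h|\,\dd\alpha$ and $\int_A^\infty\bigl|\frac{\dd}{\dd\alpha}(\alpha^{-v}h)\bigr|\,\dd\alpha$ are of size $t^{1-2\sigma}$. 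Hence integrating by parts buys the factor $1/n$ essentially for free, which is precisely what the statement claims. The uniformity in $t$ follows because all implied constants in Lemma \ref{intbyparts} and \eqref{derivestimate} depend only on $\eta$ (and $\sigma$), not on $t$.
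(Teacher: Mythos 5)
Your proposal is correct and follows essentially the same route as the paper: a single integration by parts, the boundary term at $\alpha=t/2\pi+\eta$ bounded via \eqref{est}, and the remaining integral bounded by $\tfrac{t^2}{n}\int_{t/2\pi+\eta}^\infty \alpha^{-2\sigma-2}\,\dd\alpha \asymp t^{1-2\sigma}/n$ using \eqref{derivestimate}. Your write-up merely makes explicit the product-rule step (factoring out $\alpha^{-v}$ and combining Lemma~\ref{intbyparts} with \eqref{derivestimate}) that the paper leaves implicit, along with the observation that $\alpha \asymp t$ on the integration range is what prevents the loss of a power of $t$.
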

\begin{proof}
Integrating by parts we find the above integral can be rewritten as
\begin{multline*} \frac{1}{2\pi\ii n}\left(\alpha^{-v} \zeta_1(u,\alpha) - \frac{\alpha^{1-u-v}}{u-1} + \frac{\alpha^{-u-v}}{2} \right)e^{-2\pi\ii n \alpha}\Big|_{\alpha = t/2\pi + \eta} \\
+ \frac{1}{2\pi\ii n} \int_{t/2\pi+\eta}^\infty  \frac{\partial}{\partial \alpha} \left(\alpha^{-v} \zeta_1(u,\alpha) - \frac{\alpha^{1-u-v}}{u-1} + \frac{\alpha^{-u-v}}{2} \right)e^{-2\pi\ii n \alpha}\, \dd \alpha. \end{multline*}
The first term can be estimated using \eqref{est}, giving
\[ \left| \frac{1}{2\pi\ii n}\left(\alpha^{-v} \zeta_1(u,\alpha) - \frac{\alpha^{1-u-v}}{u-1} + \frac{\alpha^{-u-v}}{2} \right)e^{-2\pi\ii n \alpha}\bigg|_{\alpha = t/2\pi + \eta} \right| \lesssim_\eta  \frac{t^{-2\sigma }}{n}. \]
And the second term can be estimated using \eqref{derivestimate}
\[ \left|\frac{1}{2\pi\ii n} \int_{t/2\pi+\eta}^\infty  \frac{\partial}{\partial \alpha} \left(\alpha^{-v} \zeta_1(u,\alpha) - \frac{\alpha^{1-u-v}}{u-1} + \frac{\alpha^{-u-v}}{2} \right)e^{-2\pi\ii n \alpha}\, \dd \alpha \right| \lesssim_\eta \frac{t^2}{n} \int_{t/2\pi+\eta} \alpha^{-2\sigma-2}\, \dd \alpha. \]
Performing the final integration shows that this term is $\mathcal{O}_\eta( t^{1-2\sigma}/n)$.
\end{proof}
The previous Lemma gives the following
\begin{multline*} q_n(u,v) =  \left[ \frac{1}{u-1} + \frac{1}{v-1} \right]a_n(u+v-1)  + \int_1^{t/2\pi+\eta} \left(\alpha^{-v} \zeta_1(u,\alpha) - \frac{\alpha^{1-u-v}}{u-1} + \frac{\alpha^{-u-v}}{2} \right)e^{-2\pi\ii n \alpha}\, \dd \alpha \\+ \int_1^{t/2\pi + \eta} \left(\alpha^{-u} \zeta_1(v,\alpha) - \frac{\alpha^{1-u-v}}{v-1} + \frac{\alpha^{-u-v}}{2} \right)e^{-2\pi\ii n \alpha}\, \dd \alpha + \mathcal{O}_\eta \left(\frac{t^{1-2\sigma}}{n}\right), \end{multline*}
valid for $u=\sigma+\ii t, v=\sigma-\ii t$ and $\sigma>0$. We note that for $n\neq 0$ a simple integration by parts argument provides an analytic continuation for $a_n(s)$ into $\Re s >-1$, i.e.
\begin{align*} a_n(s) &= \int_1^\infty \alpha^{-s} e^{-2\pi\ii n \alpha}\, \dd \alpha \\
 &= \frac{1}{2\pi\ii n} - \frac{s}{2\pi\ii n} \int_1^\infty \alpha^{-s-1} e^{-2\pi\ii n \alpha}\, \dd \alpha.
 \end{align*}
In particular
\[ a_n(2\sigma-1) = \frac{1}{2\pi\ii n} - \frac{(2\sigma-1)}{2\pi\ii n} \int_1^\infty \alpha^{-2\sigma} e^{-2\pi\ii n \alpha}\, \dd\alpha = \mathcal{O} \left( \frac{1}{n}\right). \]
Using integration by parts, we also have
\[ \frac{1}{u-1} \int_1^{t/2\pi + \eta} \alpha^{1-u-v} e^{-2\pi\ii n \alpha}\, \dd \alpha = \mathcal{O}\left( \frac{t^{1-2\sigma}}{n} \right), \quad \int_1^{t/2\pi + \eta} \alpha^{-u-v} e^{-2\pi\ii n \alpha} = \mathcal{O} \left( \frac{1}{n}\right).  \]
So for $0<\sigma \leq 1/2$ and $(u,v)=(\sigma+\ii t,\sigma-\ii t)$ we have
\[ q_n(u,v) = \int_1^{t/2\pi+\eta} \alpha^{-v} \zeta_1(u,\alpha) e^{-2\pi\ii n\alpha}\, \dd \alpha + \int_1^{t/2\pi+\eta} \alpha^{-v} \zeta_1(u,\alpha) e^{-2\pi\ii n\alpha}\, \dd \alpha + \mathcal{O}_\eta \left( \frac{t^{1-2\sigma}}{n}\right). \]
Finally we show that terms with $|n|>t/2\pi$ are easily controllable. For this we once again use the approximate functional equation for the Hurwitz zeta function in the form
\[ \zeta_1(s,\alpha) = \sum_{1\leq m \leq N} (\alpha+m)^{-s} + \frac{(N+\alpha)^{1-s}}{s-1}  - \frac{1}{2} (N+\alpha)^{-s} - s\int_N^\infty (\alpha + x)^{-s-1} (x-[x]-\tfrac{1}{2})\, \dd x \]
which holds for $N>1$. This follows directly from the Euler-Maclaurin formula when $\Re s>1$ and then by analytic continuation for $\Re s>0$. We will require the following lemma
\begin{lemma}
For $y>1$, $\sigma_1, \sigma_2>0$ and $|n|>t/2\pi$ we have
\[ \left|\int_1^{t/2\pi + \eta} \alpha^{-\sigma_1 +\ii t} (\alpha + y)^{-\sigma_2 - \ii t} e^{-2\pi\ii n \alpha}\, \dd \alpha\right| \lesssim \frac{y^{-\sigma_2}}{|n- t/2\pi|},   \]
\[ \left|\int_1^{t/2\pi + \eta} \alpha^{-\sigma_1 -\ii t} (\alpha + y)^{-\sigma_2 + \ii t} e^{-2\pi\ii n \alpha}\, \dd \alpha\right| \lesssim \frac{y^{-\sigma_2}}{|n + t/2\pi|} .  \]
\end{lemma}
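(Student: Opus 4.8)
The plan is to treat each integral as an oscillatory integral with non-vanishing phase derivative and apply the first-derivative (van der Corput) test. For the first integral I will write the integrand as $A(\alpha)\,e^{\ii\phi(\alpha)}$ with
\[ A(\alpha) = \alpha^{-\sigma_1}(\alpha+y)^{-\sigma_2}, \qquad \phi(\alpha) = t\ln\alpha - t\ln(\alpha+y) - 2\pi n\alpha, \]
so that $\phi'(\alpha) = g(\alpha) - 2\pi n$ with $g(\alpha) = ty/[\alpha(\alpha+y)]$. The first thing to record is that $g$ is strictly decreasing on $[1,\infty)$ and satisfies $0<g(\alpha)\le g(1)=ty/(1+y)<t$ there. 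Hence $\phi'$ is monotone, and because $|2\pi n|>t$ the point $2\pi n$ lies outside the closed interval $[0,t]$ containing the range of $g$, giving $|\phi'(\alpha)|\ge \operatorname{dist}(2\pi n,[0,t])$ uniformly in $\alpha$.

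The key estimate is then the lower bound $\operatorname{dist}(2\pi n,[0,t])\ge \pi\,|n-t/2\pi|$. For $2\pi n>t$ the distance equals $2\pi n-t=2\pi(n-t/2\pi)$, which is exactly $2\pi|n-t/2\pi|$; for $2\pi n<-t$ the distance equals $|2\pi n|$, and since $|2\pi n|>t$ one has $|2\pi n|\ge\tfrac12(t+|2\pi n|)=\tfrac12|2\pi n-t|$. In both cases $|\phi'|\gtrsim|n-t/2\pi|$ with an absolute implied constant, in particular independent of $t$; this small case analysis on the sign of $n$ is where the bulk of the care lies.

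Setting $\lambda:=\pi|n-t/2\pi|$, the first-derivative test then gives $\bigl|\int_1^\alpha e^{\ii\phi(\tau)}\,\dd\tau\bigr|\le 4/\lambda$ uniformly in the upper endpoint $\alpha$: integrating $e^{\ii\phi}=(e^{\ii\phi})'/(\ii\phi')$ by parts once bounds the boundary terms by $2/\lambda$, while the remaining integral is controlled by the total variation of $1/\phi'$, which is $\le 2/\lambda$ since $\phi'$ is monotone and of constant sign. I then bring in the amplitude by Abel summation: writing the integral as $\int A\,\dd F$ with $F(\alpha)=\int_1^\alpha e^{\ii\phi}$ and integrating by parts, and using that $A$ is positive and decreasing on $[1,t/2\pi+\eta]$ with $\sup A=A(1)=(1+y)^{-\sigma_2}\le y^{-\sigma_2}$ and total variation $A(1)-A(t/2\pi+\eta)\le y^{-\sigma_2}$, I obtain the bound $(\sup A+\operatorname{Var}A)\cdot 4/\lambda\lesssim y^{-\sigma_2}/|n-t/2\pi|$, which is the first claim.

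For the second integral the only change is the sign of the $\ii t$ terms, so the phase becomes $\psi(\alpha)=-t\ln\alpha+t\ln(\alpha+y)-2\pi n\alpha$ with $\psi'(\alpha)=-g(\alpha)-2\pi n$, while the amplitude $A$ is unchanged. Repeating the distance argument with $-2\pi n$ in place of $2\pi n$ yields $|\psi'|\ge\operatorname{dist}(-2\pi n,[0,t])\ge\pi|n+t/2\pi|$, and the identical van der Corput/Abel step produces $y^{-\sigma_2}/|n+t/2\pi|$. The main obstacle is thus the uniform lower bound on the phase derivative — verifying that $g$ stays strictly below $t$ throughout and that the denominators $|n\mp t/2\pi|$ emerge with a $t$-independent constant; everything after that is the standard first-derivative test combined with the monotonicity, hence bounded variation, of $A$.
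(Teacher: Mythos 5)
Your proof is correct and follows essentially the same route as the paper: the paper's own proof just remarks that, as in its earlier integration-by-parts lemma, the phase has no stationary points when $|n|>t/2\pi$ and integrates by parts. Your write-up simply makes explicit the details the paper leaves implicit --- the lower bound $|\phi'(\alpha)|\ge \mathrm{dist}(2\pi n,[0,t])\ge \pi|n\mp t/2\pi|$ with its sign case analysis, and the first-derivative test combined with the monotone (bounded-variation) amplitude --- so it is a fully detailed version of the same non-stationary-phase argument.
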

\begin{proof}
The proof is essentially the same as that used for Lemma \ref{intbyparts}. The oscillatory term doesn't have stationary points if $|n|>t/2\pi$ so integrating by parts yields the desired estimate.
\end{proof}
Applying this lemma and using the approximate functional equation we find that for $|n|>t/2\pi$ the following estimate is valid
\begin{multline*}\int_1^{t/2\pi+\eta} \alpha^{-v} \zeta_1(u,\alpha) e^{-2\pi\ii n\alpha}\, \dd \alpha \\ = \mathcal{O}\left( \frac{N^{\sigma}}{|n-t/2\pi|}\right) + \mathcal{O}\left( \frac{N^{1-\sigma}}{t|n-t/2\pi|}\right) + \mathcal{O}\left( \frac{N^{-\sigma}}{|n-t/2\pi|}\right) + \mathcal{O}\left( \frac{tN^{-\sigma}}{|n-t/2\pi|}\right).\end{multline*}
By choosing $N= \mathcal{O}(t^{1/2\sigma})$ we find
\[ \left|\int_1^{t/2\pi+\eta} \alpha^{-v} \zeta_1(u,\alpha) e^{-2\pi\ii n\alpha}\, \dd \alpha \right| \lesssim \frac{ t^{1/2}}{|n-t/2\pi|}. \]
Similarly
\[ \left| \int_1^{t/2\pi+\eta} \alpha^{-u} \zeta_1(v,\alpha) e^{-2\pi\ii n\alpha}\, \dd \alpha \right| \lesssim \frac{ t^{1/2}}{|n-t/2\pi|}. \]
In particular, for $u=\sigma+\ii t$ and $v=\bar u$
\[ \sum_{n>t/\pi} |q_n(u,v)|^2 \lesssim_\eta \sum_{|n|>t/\pi} \left[ \frac{t}{|n-t/2\pi|^2} + \frac{t}{|n+t/2\pi|^2} + \frac{t^{2-4\sigma}}{n^2} \right] = \mathcal{O}\left( t^{2-4\sigma}\right). \]
For $\sigma \geq \tfrac{1}{2}$ this term is bounded. Now by Parseval's theorem
\begin{align*} \int_0^1 |\zeta_1(u,\alpha)|^4\, \dd \alpha &= \sum_n |q_n(u,\bar u)|^2 \\
&\lesssim_\eta  \sum_{|n|\leq t/\pi} \left| \int_1^{t/2\pi+\eta} \alpha^{-\bar{u}} \zeta_1(u,\alpha) e^{-2\pi\ii n \alpha}\, \dd \alpha + \frac{t^{1-2\sigma}}{n}\right|^2 + \mathcal{O}\left( t^{2-4\sigma}\right) \\
&\lesssim_\eta  \sum_{|n|\leq t/\pi} \left| \int_1^{t/2\pi+\eta} \alpha^{-\bar{u}} \zeta_1(u,\alpha) e^{-2\pi\ii n \alpha}\, \dd \alpha \right|^2 + \mathcal{O}\left( t^{2-4\sigma}\right)
\end{align*}
Taking $\sigma=\tfrac{1}{2}$ and using this in Theorem 1 gives rise to the estimate in Theorem 2.

\subsubsection*{Acknowledgements}
The second author acknowledges support from EPSRC via a Senior Fellowship award.

\small

\end{document}